\newcommand{\norm}[1]{\left\Vert#1\right\Vert}
\newcommand{\abs}[1]{\left\vert#1\right\vert}
\newtheorem{theo}{Theorem}
\numberwithin{equation}{section}
\newtheorem{theorem}{Theorem}[section]
\newtheorem{proposition}[theorem]{Proposition}
\theoremstyle{definition}
\newtheorem{remark}[theorem]{Remark}
\newcommand{\bel}{\begin{equation} \label}
\newcommand{\ee}{\end{equation}}
\def\beq{\begin{equation}}
\def\eeq{\end{equation}}
\newcommand{\bea}{\begin{eqnarray}}
\newcommand{\eea}{\end{eqnarray}}
\newcommand{\beas}{\begin{eqnarray*}}
\newcommand{\eeas}{\end{eqnarray*}}
 \definecolor{mygreen}{cmyk}{1,0,1,0.1}
\begin{document}

\title{On stabilized $P1$ finite element approximation  for
time harmonic   Maxwell's equations}


\author{M. Asadzadeh$^*$ and L. Beilina 
\thanks{ Department of Mathematical Sciences, Chalmers University of Technology and
University of Gothenburg, SE-42196 Gothenburg, Sweden, e-mail:   
 e-mail:{mohammad@chalmers.se}, e-mail:{larisa@chalmers.se}}}


\date{}


\maketitle

\begin{abstract}

One way of improving the behavior of finite element schemes for 
classical, time-dependent Maxwell's equations, is to render 
them from their hyperbolic character to elliptic form. 
  This paper is devoted to the study of the stabilized  
linear finite element method for the time harmonic Maxwell's equations 
in a dual form obtained through the Laplace transformation in time.  
The model problem is for the particular case of the dielectric permittivity  
function which is assumed to be constant in a boundary neighborhood. For 
the 
stabilized model a coercivity relation is derived that 
guarantee's the existence of a 
unique solution for the 
discrete problem. The convergence is addressed both in {\sl a priori} 
and {\sl a posteriori} settings. 
In the a priori error estimates   
we confirm the theoretical convergence of the scheme in 
a $L_2$-based, gradient dependent, triple norm. The order of convergence is 
 ${\mathcal O}(h)$ in weighted Sobolev space $H^2_w(\Omega)$, and hence 
optimal. Here, $w=w(\varepsilon ,s)$ where $\varepsilon$ is the 
dielectric permittivity function and $s$ is the Laplace transformations 
variable. 
We also derive, similar, optimal { a posteriori} error estimates 
 controlled by 
a certain, weighted, norm of the residual of the computed solution.  
The  posteriori approach is used for constructing adaptive 
algorithms  
 for the computational purposes. Further, assuming a sufficiently regular 
solution for the dual problem, we reach the same convergence of 
${\mathcal O}(h)$. Finally, through implementing  
several numerical examples, we validate the robustness of the proposed scheme. 

\end{abstract}

\section{Introduction}

By the growing efficiency of the recent 
 computing facilities, 
the development of efficient computational methods for simulation of 
partial differential equations in two and three-dimensions, in particular 
when
the computational domains are very large, which are of  vital interest, 
has become achievable. 
It is very typical that in industrial applications 
computational domains often comprise very large subdomains 
with constant values of material parameters. Usually, 
only some part of these domains where there is an over-admissible 
 material change presents extra caution and therefore is of 
interest. In such cases the problem is described by a model
equation with the constant material parameters in a boundary
neighborhood of the computational domain. Some applications of the    
Maxwell's models, describing the electro-magnetic fields,  
fit into this category. 

It is well known that for the stable implementations of the finite element
solution for the Maxwell's equations, divergence-free edge elements are the
most satisfactory choice from a theoretical point of view 
~\cite{Monk, Nedelec}.  
However, the edge elements are less attractive for solution
of time-dependent problems since a linear system of equations should
be solved at every time iteration, a procedure requiring an unrealistic 
degree of time resolution, and hence  expensive from the
 programming point of view.

In contrary, continuous P1 finite elements provide both reliable and 
efficient (inexpensive)  way for numerical simulations, in particular
 compared to $H(curl)$
conforming methods. They can be efficiently used in a fully explicit
finite element scheme with lumped mass matrix \cite{delta, joly}.
However, P1 elements applied to the solution of Maxwell's equations
have a number of drawbacks, e.g., in problems with re-entered corners
and non-zero tangential components at the boundary, they result in a
spurious oscillatory solutions \cite{MP, PL}.  
There are a number of techniques
which remove such oscillatory solutions from the numerical approximations of
Maxwell's equations, see for example 
\cite{Jiang1, Jiang2, Jin,  div_cor, PL}.

We circumvent these difficulties considering convex
computational domains with constant
values of parameters in a boundary neighborhood.
More specifically,
in this work we  consider stabilized P1 finite element method 
for the numerical solution
of time harmonic Maxwell's equations for the special case when the
dielectric permittivity function has a constant value in a boundary
neighborhood.  
In this way the Maxwell's equations are 
transformed to a set
of time-independent wave equations on the boundary neighborhood. 
Thus, several type of adequate boundary conditions for these
equations might be handled by P1 finite
elements.

Recently, stability and
consistency of the stabilized P1 finite element method
 for time-dependent Maxwell's equations was presented in 
\cite{BR}. 
Efficiency in usage of an explicit P1 finite element scheme is evident
for solution of {\sl Coefficient Inverse Problems} (CIPs).  
In many algorithms which solve electromagnetic
CIPs a qualitative collection of experimental data (measurements) is
necessary at the boundary of the computational domain to determine the
dielectric permittivity function inside it.  In this case the
numerical solution of time-dependent Maxwell's equations are required
in the entire space $\mathbb{R}^{3}$, see for example \cite{BK, BTKM1,
  BTKM2, BondestaB, TBKF1, TBKF2}, and it is efficient to consider
Maxwell's equations with constant dielectric permittivity function in
a neighborhood of the boundary of the computational domain.  An
explicit P1 finite element scheme in non-conductive media
is numerically tested for solution of time-dependent Maxwell's system both 
in 2D and 3D cases in \cite{BMaxwell}. 
The  P1 finite element scheme of \cite{BMaxwell} is used
for solution of different CIPs to determine the dielectric
permittivity function in non-conductive media for time-dependent
Maxwell's equations using simulated and experimentally generated data,
see, e.g., 
\cite{BMedical, BMaxwell, BK,  BTKM1, BTKM2, BondestaB, TBKF1, TBKF2}. 
In this study we derive optimal a priori and a posteriori 
convergence rates for the P1 finite element 
scheme, for the time harmonic Maxwell system, assuming a constant  dielectric 
permittivity function at a boundary neighborhood, 
hence with no in- and outflow.  
 
An outline of this paper is as follows. 
In Section 2 we introduce the mathematical model and present the Cauchy problem 
for the time harmonic Maxwell's equations, where we assumed 
no dielectric volume 
charge. In Section 3 we describe variational method for the stabilized model, 
set up the finite element scheme and prove its well-posedness. 
Section 4 is devoted to the error analysis, where optimal a priori and a 
posteriori error estimates are derived in a, gradient dependent, triple norm 
of Sobolev type. In the a posteriori case the boundary residual, containing a 
normal derivative, is balanced by a multiplicative positive power of the 
mesh parameter. Further regularity assumptions on the solution of 
a dual problem yields a superconvergence for the a posteriori error estimate 
of order ${\mathcal O}(h^{3/2}).$  An adaptivity algorithm is presented 
in order to reach an adequate stopping criterion in iterative mesh-generation 
procedure. Section 5 is devoted to implementations that justify the 
theoretical/approximative achievements in the paper. Finally, in Section 6 we 
conclude the results of the paper. 

Throughout this paper $C$ denotes a generic constant, 
not necessarily the same at each occurrence and independent of 
the mesh parameter, the solution and other involved parameters, 
unless otherwise specifically specified. 

\section{The mathematical model}

The original model here is given in terms of the electric field
 $\hat{E}\left( x,s\right), x \in
\mathbb{R}^{d}, d=2,3$ and is varying with  the   pseudo-frequency 
$s > const. > 0$, see \cite{BK}, under the 
assumption  that the magnetic permeability of 
the medium is $\mu \equiv 1$.  
We consider the Cauchy problem for the time-harmonic Maxwell's equations for
  electric field $\hat{E}\left( x,s\right)$  under the assumption of
 the vanishing  electric  volume
  charges. Hence, 
  the corresponding equation is then given by 
\begin{equation}\label{model}
\begin{split}
 s^2  \varepsilon(x) \hat{E}(x,s)   +  \nabla \times \nabla \times \hat{E}(x,s)  &= s \varepsilon(x)  f_0(x),~~ x \in \mathbb{R}^{d}, d=2,3 \\
  \nabla \cdot(\varepsilon(x) \hat{E}(x,s)) &= 0. 
\end{split}
\end{equation}
Here, $ \varepsilon(x) = \varepsilon_r(x) \varepsilon_0$ is the
 dielectric permittivity function,  $\varepsilon_r(x)$ is the
dimensionless relative dielectric permittivity function
and $\varepsilon_0$ is the permittivity of
the free space, and 
\begin{equation}\label{divfree} 
\nabla \times \nabla \times {E}= \nabla(  \nabla\cdot E)-\nabla^2E. 
\end{equation}
\noindent 
The model equation \eqref{model}  can be  obtained by applying the 
Laplace transform in time, viz. 
\begin{equation} \label{laplace}
\begin{split}
\widehat{E}(x, s) &:=  \int_{0}^{+\infty}  E(x,t) e^{-s t} dt, \qquad s =const. >0 
\end{split}
\end{equation}
 to the function
$E\left( x,t\right)$ satisfying  the
time-dependent Maxwell's equations
\begin{equation}\label{E_gauge}
\begin{split}
  \varepsilon(x) \frac{\partial^2 E(x,t)}{\partial t^2} +  
\nabla \times \nabla \times E(x,t)  &= 0, 
~~ x \in \mathbb{R}^{d}, d=2,3,\,\, t\in (0,T].\\
   \nabla \cdot(\varepsilon E)(x,t) &= 0, \\
  E(x,0) = f_0(x), ~~~\frac{\partial E}{\partial t}(x,0) &= 0,~~ x \in \mathbb{R}^{d}, \quad d=2,3.
\end{split}
\end{equation}
In the problem \eqref{E_gauge} we consider non-zero initial condition
$E(x,0) = f_0(x)$ which is important for solution of coefficient
inverse problem for determination of the function $\varepsilon(x)$
from finite number of observations at the boundary. This condition
gives stability estimate and uniqueness of the reconstruction of the
function $\varepsilon(x)$ using finite number of observations of the
electric field $E(x,t)$ at the small neighborhood of the boundary
$\Gamma$, see details in \cite{BCS}.

We are not able, numerically, solve the problem (\ref{E_gauge}) in 
unbounded domains and thus introduce a convex bounded, polygonal, subdomain
$\Omega\subset \mathbb{R}^{d}, d=2,3$ with  boundary $\Gamma$: 
More specifically, $\Omega$ is a simply connected domain. We define   
$\Omega_2 :=\Omega \setminus \Omega_1$, where $\Omega_1\subset \Omega$ 
has positive Lebesgue measure and 
$\partial\Omega\cap\partial\Omega_1=\emptyset$. In this way 
cutting out $\Omega_1$ from $\Omega$, 
the new subdomain $\Omega_2$ shares the boundary with both 
$\Omega$ and $\Omega_1$:  
$\partial\Omega_2=\partial\Omega\cup\partial\Omega_1$, 
$\Omega=\Omega_{\rm 1}\cup\Omega_2$,                              
$\Omega_1 =\Omega \setminus \Omega_2$ and 
$\bar\Omega_{\rm 1} \cap \bar\Omega_{\rm 2}=\partial \Omega_{\rm 1} $, 
(see the Fig. \ref{fig:F0}).

\begin{figure}[tbp]
\begin{center}
\begin{tabular}{cc}
  {\includegraphics[scale=0.25,clip=]{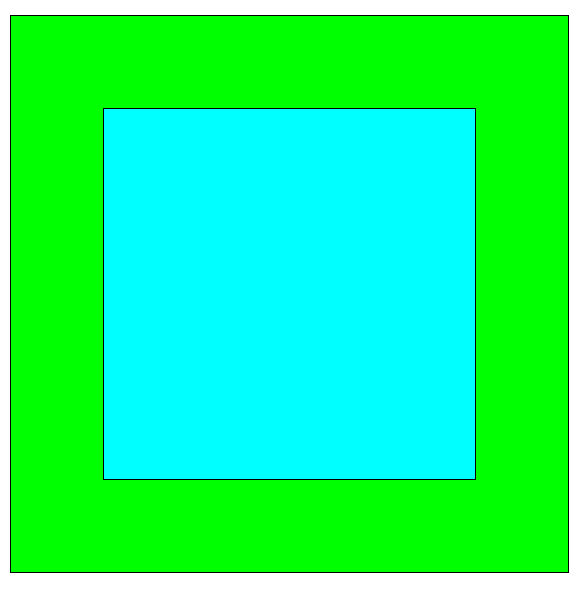}}
\put(-70,60){$\Omega_{1}$}
\put(-70,10){$\Omega_{2}$}
\put(-110,10){$\varepsilon=1$}
\put(-110,90){$\varepsilon \in [1,d_1]$}
&
 {\includegraphics[scale=0.25, clip=]{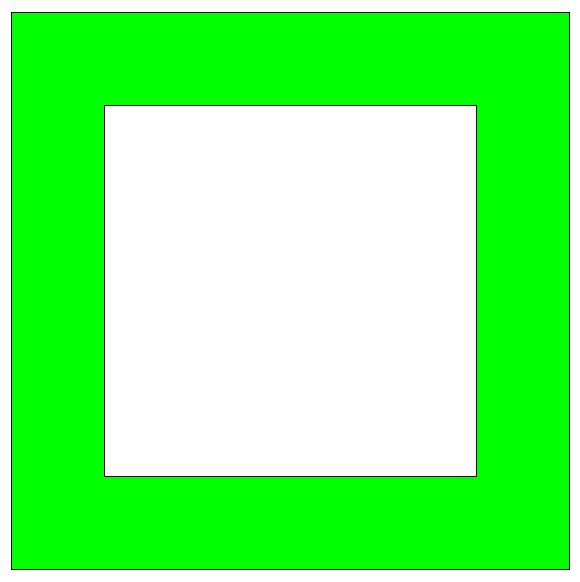}}
 \put(-70,10){$\Omega_{2}$}
\put(-110,10){$\varepsilon=1$}
\\
 a) $ \Omega = \Omega_{1} \cup \Omega_{2}$ &
 b) $\Omega_{2}$ 
\end{tabular}
\end{center}
\caption{\small \emph{\   Domain   decomposition in $\Omega$.
}}
\label{fig:F0}
\end{figure}

To proceed we  assume that $\varepsilon(x) \in C^{2}(\mathbb{R}^{d}), 
d=2,3$ and 
 for some known constant $d_1>1$, 
\begin{equation} \label{2.3}
\begin{split}
  \varepsilon(x) \in \left[ 1,d_1\right], \quad ~\text{ for }\,\,\,& x\in  \Omega _{1}, \\
  ~  ~ \varepsilon(x) =1, \qquad \text{ for } \,\,\,& x\in  \Omega \setminus \Omega _{1} , \\
\partial_\nu\varepsilon =0 , \qquad 
\text{ for }\,\,\, & x\in \partial\Omega_2. 
\end{split}
\end{equation}
\begin{remark}\label{Remark1}
Conditions \eqref{2.3} mean that, 
in the vicinity of the boundary of the computational domain $\Omega$, 
 the equation \eqref{E_gauge}
transforms to the usual time-dependent wave equation.
\end{remark} 
As for $\Gamma:=\partial\Omega$ the boundary of the computational domain
$\Omega$, we use the split $\Gamma =\Gamma_{1} \cup \Gamma_{2} \cup
\Gamma_{3}$, so that $\Gamma_{1}$ and $\Gamma_{2}$ are the top
and bottom sides, with respect to $y$- (in $2d$) or $z$-axis (in $3d$),  
of the domain $\Omega$, respectively, 
while $\Gamma_{3}$ is the
rest of the boundary. Further, $\partial_\nu(\cdot)$ denotes 
the normal derivative on $\Gamma$,  where $\nu$ is the 
outward unit normal vector on the boundary $\Gamma$.

\begin{remark}\label{Remark2}
In most estimates below, it suffices to 
restrict the Neumann boundary condition 
for the dielectric permittivity function to: $\partial_\nu\varepsilon (x)=0$,  
 to $\Gamma_1\cup \Gamma_2$. Possible usage of the general 
form in \eqref{2.3} 
will 
be clear from the context.              

\end{remark}

Now, using similar argument as in the  studies in 
\cite{BMedical}-\cite{BTKM2} and by Remark \ref{Remark1},  
for the time-dependent wave equation, we
impose first order absorbing boundary condition \cite{EM} 
at $\Gamma_1\cup \Gamma_2$: 
\begin{equation}\label{absorb}
\partial _{\nu }E+\partial _{t}E=0,\qquad 
\left( x,t\right) \in (\Gamma_1 \cup \Gamma_2) \times ( 0,T] . 
\end{equation}
To impose boundary conditions at $\Gamma_3$ we can assume that the
surface $\Gamma_3$ is located far from the domain $\Omega_1$. Hence,
we can  assume that $E\approx E^{inc}$ in a vicinity of 
$\Gamma_3$, where  $E^{inc}$ is the incident field. 
Thus, at $\Gamma_3$ we may impose Neumann boundary condition
\begin{equation} \label{neumann}
\partial _{\nu }E=0,\qquad \left( x,t\right) \in \Gamma_3 \times (0,T].  
\end{equation}

\noindent 
Finally, using  the well
known vector-analysis relation \eqref{divfree}
 and applying the Laplace transform  
to the equation \eqref{E_gauge} and the 
boundary conditions \eqref{absorb}-\eqref{neumann} in the time
 domain, the problem (\ref{model}) will be transformed to the
 following model problem
\begin{equation}\label{model2}
\begin{split}
  s^2 \varepsilon(x) \hat{E}(x,s)   +
\nabla (\nabla
\cdot \hat{E}(x,s)) - \triangle \hat{E}(x,s)
  &= s \varepsilon(x)  f_0(x),~~ x \in \mathbb{R}^{d}, d=2,3 \\
 \nabla \cdot (\varepsilon(x) \hat{E}(x,s)) &= 0, \\
\partial _{\nu } \hat{E}(x,s) &=0, \quad  x \in \Gamma_3, \\
\partial _{\nu } \hat{E}(x,s)  &= f_0(x) - s  \hat{E}(x,s),  
x \in \Gamma_1 \cup \Gamma_2. 
\end{split}
\end{equation}

\section{Variational approach} \label{sec:varform}

We denote the standard inner product in $[L_2(\Omega)]^d$ as
$(\cdot,\cdot), ~ d \in \{2,3\}$, and the corresponding norm by
$\parallel \cdot \parallel$. Similarly we denote by
$\langle \cdot, \cdot\rangle_{\Gamma}$ the standard inner product
of $[L_2(\Gamma)]^{d-1}$ and the associated $L_2(\Gamma)$-norm 
by $\| \cdot \|_{\Gamma}$.  
We define the $L_2$-weighted scalar product
by 
$$
(u,v):= \int_{\Omega} u \cdot v~ d{\bf x},\quad  
(u,v)_{\omega}:= \int_{\Omega} u \cdot v~\omega d{\bf x}, \quad 
\langle u, v \rangle_\Gamma:=\int_\Gamma  u \cdot v~ d{\sigma}, 
$$
and  the $\omega$-weighted $L^2(\Omega)$ norm as 
$$
\| u \|_{\omega}:=\sqrt{\int_{\Omega}  | u|^2 \,\omega d{\bf x}},\qquad 
 ~\omega > 0, \quad \omega \in L^{\infty}(\Omega).
$$

\subsection{Stabilized model}

The stabilized formulation of the problem \eqref{model2}, with 
$ d=2,3$, is now written as:
\begin{equation}\label{model3}
\begin{split}
  s^2  \varepsilon(x) \hat{E}(x,s)   - \triangle \hat{E}(x,s)
  &-  \nabla ( \nabla \cdot (( \varepsilon  -1) \hat{E}(x,s)) =
  s \varepsilon(x)  f_0(x) \quad x \in \mathbb{R}^{d},\\
&\partial _{\nu } \hat{E}(x,s) =0, \quad x \in \Gamma_3, \\
&\partial _{\nu } \hat{E}(x,s)  = f_0(x) - s  \hat{E}(x,s),  \quad x \in \Gamma_1 \cup \Gamma_2, 
\end{split}
\end{equation}
where the second equation in  \eqref{model2} is hidden in the first one above. 
Let us first show that
the  stabilized problem \eqref{model3} 
is equivalent to the original one \eqref{model2}.
To do this we consider the variational formulation of 
\eqref{model3} for all 
 ${\textbf v} \in [H^1(\Omega)]^3$, 
\begin{equation}\label{eq2}
  \begin{split}
 & (s^2 \varepsilon \hat{E}, {\textbf v})
    + (\nabla  \hat{E}, \nabla {\textbf v})  
    +
    (\nabla \cdot (\varepsilon \hat{E}), \nabla \cdot {\textbf v})
    - (\nabla \cdot \hat{E}, \nabla \cdot {\textbf v}) \\ 
  &-\langle f_0, {\textbf v}\rangle_{\Gamma_1\cup\Gamma_2} 
+ \langle s \hat{E}, {\textbf v}\rangle_{\Gamma_1\cup\Gamma_2}
 - \langle\nabla \cdot (\varepsilon \hat{E})  - \nabla \cdot \hat{E}, 
{\textbf v} \cdot  \nu \rangle_{\Gamma}
  = (s \varepsilon f_0,  {\textbf v}). 
  \end{split}
\end{equation}
{\bf Assumption 1.}\label{Assumption1}  By condition \eqref{2.3} we may 
assume that 
the dielectric permittivity function $\varepsilon(x)\equiv 1$ on a 
neighborhood 
of $\Gamma$ and hence the last boundary integral above is indeed $\equiv 0$. 
We have kept this term in order to follow the computational steps. A full 
consideration with  varying $\varepsilon(x)$ is of vital importance 
for the time dependent problem and will be  included in the 
subject of a forthcoming study. For a communication on this see, e.g., 
\cite{BR}. 

Integration by parts, in the spatial domain, in the second, third 
and fourth terms 
in the equation \eqref{eq2} yields, for all 
$ {\textbf v} \in [H^1(\Omega)]^3$, that 
\begin{equation}\label{eq3}
\begin{split}
 & (s^2 \varepsilon  \hat{E}, {\textbf v}) +
  (\nabla (\nabla \cdot \hat{E}), {\textbf v}) - (\triangle \hat{E}, {\textbf v})
  - (\nabla (\nabla \cdot (\varepsilon  \hat{E})),{\textbf v})  
  -\langle f_0, {\textbf v}\rangle_{\Gamma_1\cup\Gamma_2} + 
\langle s \hat{E}, {\textbf v}\rangle_{\Gamma_1\cup\Gamma_2} \\
  &- \langle \nabla \cdot (\varepsilon \hat{E})
  - \nabla \cdot \hat{E}, {\textbf v} \cdot  \nu \rangle_{\Gamma}
+ \langle \partial_\nu\hat{E}, {\textbf v}\rangle_{\Gamma} +
 \langle\nabla \cdot (\varepsilon \hat{E}) 
- \nabla \cdot \hat{E}, {\textbf v} \cdot  \nu \rangle_{\Gamma}  =  
( s \varepsilon f_0,  {\textbf v}),
\end{split}
\end{equation}
which can be simplified as 
\begin{equation}\label{eq3new}
\begin{split}
&  (s^2 \varepsilon \hat{E}, {\textbf v}) +
  (\nabla (\nabla \cdot \hat{E}), {\textbf v}) - 
(\triangle \hat{E}, {\textbf v})
  - (\nabla (\nabla \cdot (\varepsilon \hat{E})),{\textbf v})  \\
  &- \langle f_0, {\textbf v}\rangle_{\Gamma_1\cup\Gamma_2} + 
\langle s \hat{E}, {\textbf v}\rangle_{\Gamma_1\cup\Gamma_2} 
+ \langle \partial_\nu \hat{E}, {\textbf v}\rangle_{\Gamma_1\cup\Gamma_2}  = 
 (s \varepsilon f_0,  {\textbf v}),
\qquad \forall {\textbf v} \in [H^1(\Omega)]^3. 
\end{split}
\end{equation}
Using the conditions \eqref{2.3}   we get from the above equation that 
\begin{equation}\label{eq3bis}
  \begin{split}
  s^2  \varepsilon \hat{E} +
  \nabla (\nabla \cdot \hat{E}) - \triangle \hat{E} 
- \nabla (\nabla \cdot (\varepsilon \hat{E})) &=  s \varepsilon f_0\,\,\,   
 \qquad \,\mbox{ in } \Omega,\\
  \partial _{\nu}\hat{E} &= f_0 - s\hat{E}  \quad
\mbox{ on } \Gamma_1 \cup \Gamma_2,\\
      \partial_{\nu} \hat{E} &= 0  \qquad\qquad \mbox{ on } \Gamma_3.
  \end{split}
\end{equation}
From this equation it follows that $\nabla \cdot (\varepsilon  \hat{E})=0$.  
To see it, we let $\tilde{E}$ be the unique solution of the problem
\eqref{model2} and consider the difference 
$\bar{E}=\hat{E} - \tilde{E}$ between the solution $\hat{E}$ of the problem 
\eqref{eq3bis} and the solution $\tilde{E}$  of the problem \eqref{model2}.
We observe that the function $\bar{E}$ is the solution of the following 
boundary value problem:
\begin{equation}\label{eq5}
  \begin{split}
 s^2  \varepsilon \bar{E}  - \Delta \bar{E} 
    - \nabla( \nabla \cdot ((\varepsilon - 1)  \bar{E})  ) &= 0   
\,\,\,\quad \qquad\mbox{ in } \Omega, \\
    \partial_{\nu} \bar{E} &= -s  \bar{E} \qquad \mbox{ on } \Gamma_1 \cup \Gamma_2, \\
      \partial_{\nu} \bar{E} &= 0 \,\,\, \quad\qquad  \mbox{ on } \Gamma_3.
  \end{split}
\end{equation}
Now we multiply the equation (\ref{eq5}) by 
${\textbf v} \in [H^1(\Omega)]^3$ and integrate 
over $\Omega$ to get:
\begin{equation}\label{aux0}
\begin{split}
  (s^2  \varepsilon \bar{E} , {\textbf v}) 
  + (\nabla  \bar{E},\nabla {\textbf v})  + ((\varepsilon  - 1) \nabla \cdot \bar{E},\nabla \cdot {\textbf v}) + 
\langle s\bar{E},{\textbf v}\rangle_{\Gamma_1\cup\Gamma_2} &= 0 \quad 
\mbox{ in } \Omega.
  \end{split}
\end{equation}
Using ${\textbf v} = \bar{E}$ 
(\ref{aux0}), performing the integration over  
$\Omega$ and taking into account the condition that 
$\varepsilon(x)\equiv 0$ in the vicinity of $\Gamma$ hence, 
$(\nabla \cdot( (\varepsilon - 1)\bar{E}),
{\textbf v \cdot \nu})_{\Gamma}  =0$, 
  we get 
\begin{equation}\label{aux1}
\begin{split}
  (s^2  \varepsilon \bar{E} , \bar{E}) 
  + (\nabla  \bar{E},\nabla \bar{E} )  
+ ((\varepsilon  - 1) \nabla \cdot \bar{E},\nabla \cdot \bar{E}) 
+ \langle s\bar{E}, \bar{E}\rangle_{\Gamma_1\cup\Gamma_2} &= 0 \mbox{ in } \Omega,
  \end{split}
\end{equation}
or, equivalently, 
\begin{equation}\label{aux2}
  \parallel  \bar{E} \parallel_{s^2 \varepsilon}^2 + 
  \parallel \nabla  \bar{E} \parallel^2  +
  \parallel \nabla \cdot \bar{E} \parallel_{(\varepsilon - 1)}^2 
	+ \parallel  \bar{E} \parallel_{s,\,\Gamma_1\cup\Gamma_2}^2 = 0.
\end{equation}
From (\ref{aux2})  we see that $\bar{E} \equiv 0$ and hence,  
$\hat{E} = \tilde{E}$, or the solution  $\hat{E}$ 
of the stabilized problem \eqref{eq3bis} is the same as  the solution 
$\tilde{E}$  of the original problem \eqref{model2}.

\subsection{Finite element discretization}

We consider a partition of $\Omega$ into elements $K$
 denoted by ${\mathcal T}_h = \{K\}$, satisfying the standard finite 
element subdivision with the minimal angle condition of elements 
$K\in {\mathcal T}_h$.  
Here, $h=h(x)$ is a mesh function
 defined as $h |_K = h_K$, representing the local diameter of the elements.
We also denote by  $\partial {\mathcal T}_h = \{\partial K\}$ a partition of
 the boundary $\Gamma$ into boundaries  
$\partial K$ of the elements $K$ such that vertices of 
these elements belong to  $\Gamma$. Note that, although $\Gamma$ 
is polygonal (in $2d$) or plans-surface ($3d$) domain, 
it is a priori the case that $\partial K\subset \Gamma$, 
hence to fix this, some 
further subdivision might be necessary. 

To formulate the  finite element method for \eqref{model3} in $\Omega$,
   we introduce the, piecewise linear, finite element space $W_h^E(\Omega)$ 
for every component of the electric  field $E$ defined by
\begin{equation}
W_h^E(\Omega) 
:= \{ w \in H^1(\Omega): w|_{K} \in  P_1(K),  \forall K \in {\mathcal T}_h\}, \nonumber
\end{equation}
where $P_1(K)$ denote the set of piecewise-linear functions on $K$.
 Setting ${\textbf W_h^E(\Omega)} := [W_h^E(\Omega)]^3$ 
we define ${f_0}_h$ to be the usual ${\textbf  W_h^E}$-interpolant of 
 $f_0$ in \eqref{model3}. Then the finite element method for the
 problem \eqref{model3} in $\Omega$ 
  is formulated as: \\
\emph{Find }$\hat E_{h}\in {\textbf  W_h^E(\Omega)}$ 
\emph{ such  that} $\forall {\textbf v} \in {\textbf  W_h^E(\Omega)}$ 
\begin{equation}\label{eq6}
  \begin{split}
 & (s^2  \varepsilon \hat{E}_h, {\textbf v})
    + (\nabla  \hat{E}_h, \nabla {\textbf v})  
    +(\nabla \cdot (\varepsilon  \hat{E}_h), \nabla \cdot {\textbf v}) - (\nabla \cdot \hat{E}_h, \nabla \cdot {\textbf v}) \\ 
  & + \langle s \hat{E}_h, {\textbf v}\rangle_{\Gamma_1\cup\Gamma_2}
  = (s \varepsilon {f_0}_h,  {\textbf v}) + \langle {f_0}_h, {\textbf v}\rangle_{\Gamma_1\cup\Gamma_2}. \;
  \end{split}
\end{equation}
\begin{remark}
Recalling the Assumption 1, the boundary term:
$\langle \nabla\cdot(\varepsilon\hat E)-\nabla\cdot \hat E\rangle_\Gamma$ 
present in  \eqref{eq2} and \eqref{eq3} 
vanishes and hence does not appear in \eqref{eq6} and the subsequent relations.  
\end{remark}
\begin{theo}[well-posedness] \label{Wellposedness1}
The problem \eqref{eq6} has a unique solution 
$\hat{E}_h\in {\textbf  W}_h^E(\Omega)$. 
\end{theo}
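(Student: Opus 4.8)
The plan is to apply the Lax--Milgram lemma to the bilinear form implicitly defined by the left-hand side of \eqref{eq6}. Since $\textbf{W}_h^E(\Omega)$ is a finite-dimensional subspace of $[H^1(\Omega)]^3$, it is itself a Hilbert space with the induced inner product, and boundedness of the bilinear form and the linear functional is immediate: each of the four volume terms and the single boundary term in \eqref{eq6} is controlled by products of $H^1(\Omega)$-norms (with the boundary term handled via the trace inequality), and the right-hand side is a bounded linear functional on $\textbf{W}_h^E(\Omega)$ for the same reasons. The substantive point is coercivity.

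First I would write out the diagonal form of the bilinear form,
\begin{equation*}
a(\textbf{v},\textbf{v}) = \| \textbf{v} \|_{s^2\varepsilon}^2 + \| \nabla \textbf{v} \|^2 + \| \nabla \cdot \textbf{v} \|_{(\varepsilon-1)}^2 + \| \textbf{v} \|_{s,\,\Gamma_1\cup\Gamma_2}^2,
\end{equation*}
exactly as in \eqref{aux1}--\eqref{aux2} with $\bar{E}$ replaced by a generic $\textbf{v}\in\textbf{W}_h^E(\Omega)$; here the terms $(\nabla\cdot(\varepsilon\textbf{v}),\nabla\cdot\textbf{v})-(\nabla\cdot\textbf{v},\nabla\cdot\textbf{v})$ combine into $\|\nabla\cdot\textbf{v}\|_{(\varepsilon-1)}^2$. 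By the hypotheses \eqref{2.3} we have $\varepsilon(x)\in[1,d_1]$ everywhere, so $\varepsilon-1\ge 0$ and the third term is nonnegative, while $s^2\varepsilon \ge s^2 > 0$. Hence
\begin{equation*}
a(\textbf{v},\textbf{v}) \ge s^2 \|\textbf{v}\|^2 + \|\nabla \textbf{v}\|^2 \ge \min(s^2,1)\,\|\textbf{v}\|_{H^1(\Omega)}^2,
\end{equation*}
which gives coercivity on all of $[H^1(\Omega)]^3$, in particular on $\textbf{W}_h^E(\Omega)$, with a constant depending only on $s$. Lax--Milgram then yields existence and uniqueness of $\hat{E}_h$. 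Equivalently, since the problem is finite-dimensional, one may argue that \eqref{eq6} is a square linear system and coercivity forces the kernel to be trivial (if $a(\textbf{v},\textbf{v})=0$ then $\textbf{v}=0$ by the above), so the system is invertible.

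The main obstacle — more a point requiring care than a genuine difficulty — is the sign of the stabilization term $\|\nabla\cdot\textbf{v}\|_{(\varepsilon-1)}^2$: coercivity as stated relies essentially on $\varepsilon\ge 1$, so that this term helps rather than hurts. One should also note that the boundary term $\|\textbf{v}\|_{s,\,\Gamma_1\cup\Gamma_2}^2$ is nonnegative for $s>0$ and therefore only improves the estimate; it is not needed for coercivity but should be acknowledged so that no spurious trace-inequality constant enters the lower bound. Finally, I would remark that, by Assumption 1, $\varepsilon\equiv 1$ near $\Gamma$, so the boundary integral dropped between \eqref{eq2} and \eqref{eq6} genuinely vanishes and does not affect the computation of $a(\textbf{v},\textbf{v})$; the coercivity constant $\min(s^2,1)$ is independent of $h$, which is exactly what is needed for the stability of the scheme used later in the error analysis.
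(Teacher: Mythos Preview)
Your proposal is correct and follows essentially the same route as the paper: both compute $a(\textbf{v},\textbf{v})$ explicitly, observe that each term is nonnegative by \eqref{2.3} and $s>0$, and invoke Lax--Milgram. The only difference is that the paper packages the four terms into a tailored ``triple norm'' $|||\textbf{v}|||^2:=\|\textbf{v}\|_{s^2\varepsilon}^2+\|\nabla\textbf{v}\|^2+\|\nabla\cdot\textbf{v}\|_{\varepsilon-1}^2+\|\textbf{v}\|_{s,\Gamma_1\cup\Gamma_2}^2$, obtaining the exact identity $a(\textbf{v},\textbf{v})=|||\textbf{v}|||^2$ and proving continuity of $a$ and $\mathcal{L}$ in that same norm, whereas you bound below by the standard $H^1$ norm with constant $\min(s^2,1)$; the paper's choice is motivated by the subsequent error analysis, which is carried out entirely in $|||\cdot|||$.
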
 

\begin{proof}
We define the bilinear and linear forms, respectively, as 
\begin{equation}\label{bilinearform1}
\begin{split}
a( \hat{E}_h, {\textbf v})= & (s^2  \varepsilon \hat{E}_h, {\textbf v})
    + (\nabla  \hat{E}_h, \nabla {\textbf v})  
    +(\nabla \cdot (\varepsilon  \hat{E}_h), \nabla \cdot {\textbf v}) \\
& - (\nabla \cdot \hat{E}_h, \nabla \cdot {\textbf v}) 
+ \langle s \hat{E}_h, {\textbf v}\rangle_{\Gamma_1\cup\Gamma_2}
\end{split}
\end{equation}
and 
$$
{\mathcal L}( {\textbf v}):= 
(s \varepsilon f_{0,h},  {\textbf v}) 
+ \langle f_{0,h}, {\textbf v}\rangle_{\Gamma_1\cup\Gamma_2},
$$ 
and restate the equation \eqref{eq6} in its compact form as 
\begin{equation}\label{semidiscprob1A}
a( \hat{E}_h, {\textbf v})={\mathcal L}( {\textbf v}).
\end{equation}
Now the well-posedness rely on the Lax-Milgram approach based on 
showing that 
$a( \cdot, \cdot)$ is coercive and both $a( \cdot, \cdot)$ and 
${\mathcal L}( {\cdot})$ are continuous. 
To this end we introduce the triple norm 
$$
\vert\vert\vert \hat{E}_h\vert\vert\vert ^2:= 
\parallel{\hat{E}_h}\parallel_{s^2\varepsilon}^2+
\parallel{\nabla\hat{E}_h}\parallel^2+
\parallel{\nabla\cdot\hat{E}_h}\parallel_{\varepsilon -1}^2
+\parallel{\hat{E}_h}\parallel_{s, \Gamma_1\cup\Gamma_2}^2
$$
and show that there are constants $C_i,\,\, i=1,2,3$ such that 
for all $ \hat{E}_h$ and 
${\textbf v} \in {\textbf  W}_h^E(\Omega)$, 
\begin{align}\label{Coercivity1}
a( \hat{E}_h, \hat{E}_h) &\ge 
C_1 \vert\vert\vert \hat{E}_h\vert\vert\vert ^2,  &&\mbox{(Coercivity)},\\
a( \hat{E}_h, {\textbf v}) &\le C_2 \vert\vert\vert \hat{E}_h\vert\vert\vert
\cdot \vert\vert\vert{\textbf v} \vert\vert\vert, 
&&\mbox{(Continuity of $a$)},\\
\abs{{\mathcal L}( {\textbf v})} &\le  C_3
\vert\vert\vert {\textbf v} \vert\vert\vert, && 
\mbox{(Continuity of ${\mathcal L} $)}
\end{align}
The first relation is straightforward  
and in fact an equality holds with $C_1=1$, viz. 
$$
a( \hat{E}_h, \hat{E}_h)\equiv \vert\vert\vert \hat{E}_h\vert\vert\vert ^2. 
$$
The continuity of $a(\cdot, \cdot)$ is evident from the fact that, 
for every ${\textbf v}\in{\textbf  W_h^E(\Omega)}$, 
 using {\sl Cauchy-Schwarz' inequality}, we have that 
\begin{equation}\label{Coercivity2} 
\begin{split}
a( \hat{E}_h, {\textbf v}) =& 
(s\sqrt\varepsilon  \hat{E}_h, s\sqrt\varepsilon {\textbf v}) +
(\nabla  \hat{E}_h, \nabla {\textbf v}) \\
& +(\nabla \cdot (\sqrt{\varepsilon-1}  \hat{E}_h), 
\nabla \cdot ({\sqrt{\varepsilon-1}\textbf v}))
+\langle \sqrt s \hat{E}_h,  \sqrt s  {\textbf v}
\rangle_{\Gamma_1\cup\Gamma_2}  \\
 \le & 
  \parallel{\hat{E}_h}\parallel_{s^2\varepsilon}
\parallel {\textbf v }\parallel_{s^2\varepsilon}+
    \parallel{\nabla\hat{E}_h}\parallel 
 \parallel\nabla {\textbf v }\parallel 
 +   \parallel{\nabla\cdot\hat{E}_h}\parallel_{\varepsilon -1}
    \parallel{\nabla\cdot {\textbf v} }\parallel_{\varepsilon -1} \\
&+ \parallel{\hat{E}_h}\parallel_{s, \Gamma_1\cup\Gamma_2}
\parallel{\textbf v }\parallel_{s, \Gamma_1\cup\Gamma_2}
\le \vert\vert\vert \hat{E}_h\vert\vert\vert \cdot 
\vert\vert\vert {\textbf v } \vert\vert\vert.
\end{split}
\end{equation}
Likewise, for 
$f_{0,h}\in L_{2,\varepsilon} (\Omega)\cap 
L_{2, 1/{s}}(\Gamma_1\cup\Gamma_2)$ 
we   can easily verify that           
\begin{equation}\label{coercivity3}
\begin{split}
{\mathcal L}( {\textbf v }) &= 
\Big(\sqrt\varepsilon f_{0,h}, s\sqrt\varepsilon {\textbf v }\Big) 
+\langle f_{0,h}, {\textbf v }\rangle_{\Gamma_1\cup\Gamma_2} \\
&\le  
\parallel{f_{0,h}}\parallel_{\varepsilon}
\parallel{\textbf v}\parallel_{s^2\varepsilon}+
\parallel{f_{0,h}}\parallel_{ 1/{s} ,\Gamma_1\cup\Gamma_2}
\parallel{\textbf v}\parallel_{s, \Gamma_1\cup\Gamma_2}\\
&\le \Big( \parallel{f_{0,h}}\parallel_{\varepsilon}+
\parallel{f_{0,h}}\parallel_{ 1/{s} ,\Gamma_1\cup\Gamma_2}\Big)\,
\vert\vert\vert {\textbf v } \vert\vert\vert,
\end{split}
\end{equation}
and hence ${\mathcal L}$ is continuous as well.
Summing up, by the coercivity 
\eqref{Coercivity1} and the continuities \eqref{Coercivity2} and 
\eqref{coercivity3} we may use  
the Lax-Milgram theorem which guarantee's the existence of a unique 
solution for the discrete problem \eqref{eq6}. The continuities would 
yield even stability and hence justifies the well-posedness and completes 
the proof. 

\end{proof} 

\section{Error analysis}
In this section first we give a swift a priori error bound and then 
continue with a posteriori error estimates and derive  a corresponding 
adaptive algorithm for the above, piecewise 
linear approximation of the time harmonic Maxwell's equations formulated in 
\eqref{eq6}. 

\subsection{A priori error estimates}
To derive a priori error estimates we need to define a continuous 
version of the linear and bilinear forms introduced in well-posedness 
theorem. To this end, we rewrite \eqref{bilinearform1} replacing  
$\hat{E}_h$ by $\hat{E}$ and 
with a new continuous linear form 
defined with same expression as ${\mathcal L}({\textbf v})$, but for 
${\textbf v}\in H^1(\Omega)$ rather than in the finite element space 
${\textbf W}^E_h(\Omega)$: 
\begin{equation}\label{contbilinear1}
\begin{split}
a(\hat{E}, {\textbf v})= & (s^2  \varepsilon \hat{E}, {\textbf v})
    + (\nabla  \hat{E}, \nabla {\textbf v})  
    +(\nabla \cdot (\varepsilon  \hat{E}), \nabla \cdot {\textbf v}) \\
& - (\nabla \cdot \hat{E}, \nabla \cdot {\textbf v}) 
+ \langle s \hat{E}, {\textbf v}\rangle_{\Gamma_1\cup\Gamma_2},\qquad \forall {\textbf v}\in H^1(\Omega)
\end{split}
\end{equation}
and 
\begin{equation}\label{contbilinear2}
{\mathcal L}^c({\textbf v}):=(s\varepsilon f_0,  {\textbf v})+ 
\langle f_0, {\textbf v}\rangle_{\Gamma_1\cup\Gamma_2}, \qquad \forall {\textbf v}\in H^1(\Omega).
\end{equation}
Hence we have the concise form of the variational formulation 
\begin{equation}\label{contprob1A}
a(\hat{E}, {\textbf v})={\mathcal L}^c({\textbf v}), 
\qquad \forall {\textbf v}\in H^1(\Omega).
\end{equation} 
To proceed we derive {\sl Galerkin orthogonality} relation by letting, in 
\eqref{contbilinear1} and \eqref{contbilinear2}, 
${\textbf v}\in {\textbf W}^E_h(\Omega)$, as well as replacing 
$f_0$ by $f_{0,h}$ in  \eqref{contbilinear2}. 
Subtracting \eqref{semidiscprob1A} from the, such obtained, 
continuous problem 
\eqref{contprob1A} and letting 
$e(x, s):= \hat{E}(x,s)-\hat{E}_h (x,s)$ be the pointwise spatial 
error of the finite element approximation \eqref{eq6}, we get 

\begin{equation}\label{GOrtho} 
a(\hat{E}-\hat{E}_h, {\textbf v})=0,\qquad \forall 
{\textbf v}\in {\textbf W}_h^E(\Omega), 
\qquad (\mbox{Galerkin orthogonality}). 
\end{equation}
Note that restricting \eqref{contprob1A} to ${\textbf v}\in {\textbf W}_h^E(\Omega)$, 
\eqref{semidiscprob1A} and  \eqref{contprob1A} get the same right hand sides. 

Now we are ready to derive  the 
following theoretical error bound

\begin{theo}\label{Apriori} 
 Let $\hat{E}$ and $\hat{E}_h$ be the solutions for the continuous 
problem \eqref{model3} 
(in the variational form \eqref{eq2})  and its finite element approximation, 
\eqref{eq6}, respectively. 
Then, there is a constant $C$, independent of $\hat E$ and $h$, such that 
$$
\vert\vert\vert e\vert\vert\vert \le C 
\parallel{h \hat{E}}\parallel_{H^2_w(\Omega)}.  
$$
where $w=w(\varepsilon(x), s)$ is the weight function which 
depends on the dielectric permittivity 
function $\varepsilon(x)$ and the pseudo-frequency variable $s$. 
\end{theo}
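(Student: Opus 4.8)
The plan is to follow the classical Céa-type argument adapted to this coercive, gradient-dependent setting. First I would exploit the coercivity identity $a(\hat E_h,\hat E_h)\equiv |||\hat E_h|||^2$ together with the Galerkin orthogonality \eqref{GOrtho}: for any interpolant $\hat E_I\in{\textbf W}_h^E(\Omega)$ of $\hat E$, write $e=\hat E-\hat E_h=(\hat E-\hat E_I)+(\hat E_I-\hat E_h)$, note that $\hat E_I-\hat E_h\in{\textbf W}_h^E(\Omega)$, and use orthogonality to kill the cross term. Concretely, $|||\hat E_I-\hat E_h|||^2=a(\hat E_I-\hat E_h,\hat E_I-\hat E_h)=a(\hat E_I-\hat E,\hat E_I-\hat E_h)\le C_2|||\hat E-\hat E_I|||\,|||\hat E_I-\hat E_h|||$ by the continuity bound \eqref{Coercivity2}, so $|||\hat E_I-\hat E_h|||\le C_2|||\hat E-\hat E_I|||$. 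A triangle inequality in the triple norm then gives $|||e|||\le(1+C_2)|||\hat E-\hat E_I|||$, reducing everything to a pure interpolation estimate.

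Next I would choose $\hat E_I$ to be the standard (componentwise) piecewise-linear nodal interpolant and estimate $|||\hat E-\hat E_I|||$ term by term. Each of the four pieces of the triple norm — $\|\cdot\|_{s^2\varepsilon}$, $\|\nabla\cdot\|$, $\|\nabla\cdot(\cdot)\|_{\varepsilon-1}$, and $\|\cdot\|_{s,\Gamma_1\cup\Gamma_2}$ — is controlled by a weighted $L_2$ or $H^1$ norm of $\hat E-\hat E_I$. The interior terms are handled by the classical local interpolation inequalities $\|\hat E-\hat E_I\|_{L_2(K)}\le Ch_K^2|\hat E|_{H^2(K)}$ and $\|\nabla(\hat E-\hat E_I)\|_{L_2(K)}\le Ch_K|\hat E|_{H^2(K)}$; carrying the weights $s^2\varepsilon$ and $\varepsilon-1$ through (they are bounded by \eqref{2.3}) and summing over $K$ produces the bound $C\|h\hat E\|_{H^2_w(\Omega)}$ with an appropriate $w=w(\varepsilon,s)$ that absorbs the constants $s^2$, $s$, $\varepsilon$, and $\varepsilon-1$. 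The boundary term is treated by a trace inequality followed by the same local interpolation bound, again introducing only a multiplicative power of $h$ and the weight $s$; since $\varepsilon\equiv1$ near $\Gamma$ by Assumption 1, the $\nabla\cdot(\varepsilon-1)(\cdot)$ contribution vanishes on the boundary strip and no difficulty arises there.

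The main obstacle I anticipate is bookkeeping the weight function $w=w(\varepsilon(x),s)$ so that a single weighted $H^2_w(\Omega)$ norm simultaneously dominates all four rescaled interpolation contributions with constants independent of $s$, $\varepsilon$, and $h$; this requires defining $w$ as (essentially) $\max\{1,s^2\}\varepsilon$ or a comparable combination and checking that the boundary trace term — which naturally lives in a fractional-order space — is still absorbed after the trace theorem is applied on each boundary element. A secondary technical point is that $\hat E$ must be assumed in $[H^2_w(\Omega)]^3$ for the nodal interpolant to be well-defined and for the $H^2$ interpolation estimates to apply; I would state this regularity hypothesis explicitly (it is implicit in the appearance of $\|h\hat E\|_{H^2_w(\Omega)}$ on the right-hand side). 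Everything else is routine: coercivity gives the quasi-optimality for free because $C_1=1$, and the continuity constant $C_2=1$ from \eqref{Coercivity2} makes the Céa constant simply $2$, so the proof is short once the interpolation/weight accounting is in place.
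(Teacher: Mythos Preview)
Your proposal is correct and essentially matches the paper's proof: both reduce to an interpolation estimate in the triple norm via coercivity and Galerkin orthogonality, then bound each weighted piece (including the boundary term via a trace estimate) by $h$ times a weighted $H^2$ seminorm. The only cosmetic difference is that the paper uses the direct identity $|||e|||^2=a(e,e)=a(e,\hat E-\pi_h\hat E)$ in place of your split-plus-triangle-inequality C\'ea argument, giving constant $1$ rather than $1+C_2=2$; the trace step is handled exactly as you describe (the paper cites Brenner--Scott, Theorem~1.6.6) and the weight is recorded explicitly as $w=\max(hs^2\varepsilon,\,h^{1/2}(\varepsilon-1),\,h^{1/2}s)$.
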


\begin{proof}
Using the definition of the triple norm and Galerkin orthogonality 
\eqref{GOrtho} we have 
\begin{equation}\label{AprioriEE1}
\begin{split}
\vert\vert\vert e\vert\vert\vert^2 = a(e,e)
=a(e, \hat{E}-\hat{E}_h)=a(e, \hat{E})=a(e, \hat{E}-{\textbf v}),
\qquad\forall {\textbf v}\in {\textbf W}_h^E(\Omega).
\end{split} 
\end{equation}
Now we set ${\textbf v}:=\pi_h\hat{E}$, the interpolant of $\hat{E}$. 
Then, using the Cauchy-Schwarz' inequality and the interpolation 
error estimates we can estimate the right 
hand side of \eqref{AprioriEE1} as follows: 
\begin{equation}\label{AprioriEE2}
\begin{split}
\vert\vert\vert e\vert\vert\vert^2 =&
(e, \hat{E}-\pi_h\hat{E})_{s^2\varepsilon}+
(\nabla e, \nabla(\hat{E}-\pi_h\hat{E}))\\
&+
(\nabla \cdot e, \nabla\cdot(\hat{E}-\pi_h\hat{E}))_{\varepsilon -1}+
\langle e, \hat{E}-\pi_h\hat{E}\rangle_{s,\Gamma_1\cup\Gamma_2}\\
\le &\parallel{e}\parallel_{s^2\varepsilon}
\parallel{\hat{E}-\pi_h\hat{E}}\parallel_{s^2\varepsilon}+
\parallel{\nabla e}\parallel
\parallel{\nabla(\hat{E}-\pi_h\hat{E})}\parallel \\
&+
\parallel{\nabla\cdot e}\parallel_{\varepsilon-1}
\parallel{\nabla\cdot(\hat{E}-\pi_h\hat{E})}\parallel_{\varepsilon-1}+
\parallel{e}\parallel_{s,\Gamma_1\cup\Gamma_2}
\parallel{\hat{E}-\pi_h\hat{E}}\parallel_{s,\Gamma_1\cup\Gamma_2}\\
\le &C_1\parallel{e}\parallel_{s^2\varepsilon}
\parallel{h^2 D^2\hat{E}}\parallel_{s^2\varepsilon}+
C_2\parallel{\nabla e}\parallel
\parallel{h D^2\hat{E}}\parallel +\\
& +C_2\parallel{\nabla\cdot e}\parallel_{\varepsilon-1}
\parallel{h D^2\hat{E}}\parallel_{\varepsilon-1}+\\
&+ C_3\parallel{e}\parallel_{s,\Gamma_1\cup\Gamma_2}
\Big(\parallel{h^2 D^2\hat{E}}\parallel_{s, L_2(\Omega)}^{1/2}\cdot
\parallel{h D^2\hat{E}}\parallel_{s,L_2(\Omega)}^{1/2}\Big) \\
\le &C \vert\vert\vert e\vert\vert\vert\cdot 
 \Big(\parallel{h^2 D^2\hat{E}}\parallel_{s^2\varepsilon}+
\parallel{h D^2\hat{E}}\parallel
+\parallel{h D^2\hat{E}}\parallel_{\varepsilon-1}+\\
&
\qquad\qquad \qquad+{\parallel{h^{3/2} D^2\hat{E}}\parallel}_s\Big)
\le C \vert\vert\vert e\vert\vert\vert\cdot 
\parallel{h D^2\hat{E}}\parallel_w,
\end{split} 
\end{equation}
where $w=\max(h s^2\varepsilon, h^{1/2}(\varepsilon-1), h^{1/2}s)$, and 
to estimate the interpolation error at the boundary 
we apply the Poincare inequality (since  $\hat{E}-\pi_h\hat{E}=0$ 
on the whole or part of $\Gamma$ with positive Lebesgue measure) 
to the  the trace estimate (see Brenner-Scott \cite{Brenner}; 
Theorem 1.6.6): 
\begin{equation}\label{Tracs1}
\begin{split}
\parallel{\hat{E}-\pi_h\hat{E}}\parallel_{s,\Gamma} &\le \tilde C 
\parallel{\hat{E}-\pi_h\hat{E}}\parallel_{s, L_2(\Omega)}^{1/2}
\parallel{\hat{E}-\pi_h\hat{E}}\parallel_{s, H^1(\Omega)}^{1/2}\\
 &\le \tilde C 
\parallel{\hat{E}-\pi_h\hat{E}}\parallel_{s, L_2(\Omega)}^{1/2} 
\parallel{\nabla(\hat{E}-\pi_h\hat{E})}\parallel_{s, L_2(\Omega)}^{1/2}\\
 &\le \tilde C \parallel{h^2 D^2 \hat{E}}\parallel_{s, L_2(\Omega)}^{1/2}\cdot 
\parallel{h D^2 \hat{E}}\parallel_{s, L_2(\Omega)}^{1/2}
\end{split}
\end{equation}
where $D^2$ stands for the differential operator which, e.g., for the 
 2-dimensional domain $\Omega$, i.e., in 
$xy$-geometry and for $u\in{\mathcal C}^{(2)}(\Omega)$   
is given by 
$$
D^2 u:=(u_{xx}^2+2u_{xy}^2+u_{yy}^2)^{1/2}. 
$$
Finally $C=\max C_i, \, i=1,2$ are the interpolation constants 
and $C_3=\tilde C(C_1+C_2)$ and the proof is complete. 
\end{proof}
\begin{remark}
We note that the obtained 
error is of order ${\mathcal O}(h)$ in $H^2_w(\Omega) $ 
which is optimal due to 
the gradient term in the triple norm. 
This may be further improved, to achieve 
superconvergence and estimations in 
the negative norm which in scalar form is defined as 
$$
\norm{w}_{H^{-s}(\Omega)}=\sup_{\varphi\in H^{s}(\Omega)\cap H^1_0(\Omega)}
\frac{(w,\varphi)}{\norm{\varphi}_{H^s(\Omega)}}. 
$$
This, however, extended to vector form, 
yields extra computational complexity. 
Otherwise, {\sl with no gradient term}, 
using the Laplace transform, the equation is rendered from being 
hyperbolic to an elliptic one and the trace theorem estimating the boundary 
integrals is the term setting an upper limit of ${\mathcal O}(h^{3/2})$. 
This however, would not survive due to the fact that the triple norm 
involves gradient norm and the optimal order is indeed ${\mathcal O}(h)$,
(rather than ${\mathcal O}(h^2)$ of the purely $L_2$-norm). 
\end{remark}

\subsection{A posteriori error estimates}
For the approximate solution $ \hat{E}_h= \hat{E}_h(x,s)$ 
of the problem \eqref{model3}, where 
$ x \in \Omega\subset \mathbb{R}^{d},\,\, d=2,3$, 
we define the residual errors, viz. 
\begin{equation}\label{Resid1}
\begin{split}
-{\mathcal R}( \hat{E}_h):=&
 s^2  \varepsilon(x) \hat{E}_h   - \triangle \hat{E}_h
  -  \nabla ( \nabla \cdot (( \varepsilon(x)  -1) \hat{E}_h) -
  s \varepsilon(x)  f_{0,h}(x),\quad\mbox{and}\\
-{\mathcal R}_\Gamma( \hat{E}_h):=&
h^{-\alpha}\Big(\partial _{\nu } \hat{E}_h +s  \hat{E}_h- f_{0,h}(x)\Big), 
\quad\mbox{for} \quad x \in \Gamma_1 \cup \Gamma_2, \quad 0<\alpha\le 1. 
\end{split}
\end{equation} 
By the Galerkin orthogonality we have that 
${\mathcal R}( \hat{E}_h)\perp{\mathbf W}_h^E(\Omega)$. 
Now the objective is to bound the triple norm of the 
error $e(x,s):=\hat{E}(x,s)-  \hat{E}_h(x,s)$ 
by some adequate norms of ${\mathcal R}( \hat{E}_h) $ 
and ${\mathcal R}_\Gamma( \hat{E}_h)$ with a relevant, fast, decay. 
This may be done in a few, relatively similar, ways, e.g., one 
can use the variational formulation and interpolation in the error 
combined with Galerkin orthogonality.  
Or one may use a dual problem approach setting the source term 
(or initial data) 
as the error. We check 
both techniques to the time harmonic Maxwell's equations. 
The former is given in the Theorem \ref{TheoApost1} below 
and the latter is the subject of Proposition \ref{Prop1Apost}. 
The proof of the 
main theorem will rely on assuming a first order approximation for the 
initial value of the original field $f_0(x):=E(x,t)\vert_{t=0_-}$, 
for $\beta\approx 1$, and according to 
\begin{equation}\label{Error1BB}
\parallel f_0-f_{0,h}\parallel_\varepsilon\approx 
\parallel f_0-f_{0,h}\parallel_{1/s, \Gamma}\approx 
\parallel f_0-f_{0,h}\parallel_{(\varepsilon -1)^2/s, \Gamma}
={\mathcal O}(h^\beta).
\end{equation}
The justification for \eqref{Error1BB} is that $f_0$, as the time independent 
field $E(x,0)$  satisfies a Poisson type equation and, tackling the 
contributions from the weights, a finite element
bound of the form  \eqref{Error1BB} is evident. 

\begin{theo}\label{TheoApost1} 
Let $\hat{E}$ and $\hat{E}_h$ be the solutions for the 
continuous problem  \eqref{model3} 
(in the variational form \eqref{eq2}) and its finite element approximation 
\eqref{eq6}, respectively. Further we assume that we have the error bound 
\eqref{Error1BB} for the initial field  $f_0(x):=E(x,t)\vert_{t=0_-}$. 
Then, there exist interpolation constants $C_1$ and $C_2$, 
independent of $h,\, $ and $\hat{E}$,  but may depend on 
$\varepsilon$ and $s$
such that the following a posteriori 
error estimate holds true
\begin{equation}\label{Resid2}
\vert\vert\vert e\vert\vert\vert\le 
C_1\, h\parallel {\mathcal R}\parallel+ 
C_2\, h^{\alpha}
\parallel {\mathcal R}_\Gamma\parallel_{1/s,\,\Gamma_1\cup\Gamma_2}+\,
{\mathcal O}(h^\beta), 
\end{equation} 
where $\alpha\approx\beta\approx 1$. 
\end{theo}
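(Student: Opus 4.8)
The plan is to start from the error representation furnished by coercivity, namely $\vert\vert\vert e\vert\vert\vert^2 = a(e,e)$, and to rewrite $a(e,\cdot)$ in terms of the interior and boundary residuals $\mathcal{R}$ and $\mathcal{R}_\Gamma$ introduced in \eqref{Resid1}. Concretely, I would first insert a test function: by Galerkin orthogonality \eqref{GOrtho}, $a(e,\pi_h e)=0$, so $\vert\vert\vert e\vert\vert\vert^2 = a(e, e-\pi_h e)$, where $\pi_h e\in\mathbf{W}_h^E(\Omega)$ is the interpolant of $e$. Next I would expand $a(\hat{E}-\hat{E}_h, e-\pi_h e)$ using the definition \eqref{contbilinear1}, integrate by parts in the Laplacian and divergence terms elementwise over $\mathcal{T}_h$, and collect the volume contributions into $(\mathcal{R}(\hat{E}_h), e-\pi_h e)$ and the boundary contributions on $\Gamma_1\cup\Gamma_2$ into $\langle h^{\alpha}\mathcal{R}_\Gamma(\hat{E}_h), e-\pi_h e\rangle_{\Gamma_1\cup\Gamma_2}$; the part of the right-hand side involving $f_0-f_{0,h}$ is what produces the $\mathcal{O}(h^\beta)$ term via \eqref{Error1BB}. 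The contributions on $\Gamma_3$ vanish because of the Neumann condition built into both the continuous problem \eqref{model3} and the discrete one \eqref{eq6}, and the stabilization term $\nabla(\nabla\cdot((\varepsilon-1)\hat{E}_h))$ contributes no boundary term near $\Gamma$ by Assumption 1 ($\varepsilon\equiv 1$ in a neighborhood of $\Gamma$).

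Having arrived at the identity
\[
\vert\vert\vert e\vert\vert\vert^2 = (\mathcal{R}(\hat{E}_h),\, e-\pi_h e) + \langle h^{\alpha}\mathcal{R}_\Gamma(\hat{E}_h),\, e-\pi_h e\rangle_{\Gamma_1\cup\Gamma_2} + \big(\text{terms in } f_0-f_{0,h}\big),
\]
I would estimate each piece. For the interior term, Cauchy--Schwarz followed by the interpolation estimate $\|e-\pi_h e\|\le C_1\|h\,D^2 e\|$ — or rather, since $e=\hat{E}-\hat{E}_h$ and $\hat{E}_h$ is piecewise linear, $\|e-\pi_h e\|\le C_1\|h\nabla e\|\le C_1 h\,\vert\vert\vert e\vert\vert\vert$ — gives a bound $C_1 h\|\mathcal{R}\|\cdot\vert\vert\vert e\vert\vert\vert$. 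For the boundary term I would use the trace/Poincar\'e argument already exploited in \eqref{Tracs1}, controlling $\|e-\pi_h e\|_{s,\Gamma_1\cup\Gamma_2}$ by an interpolated product $\|h^{1/2}\nabla e\|_{s}\lesssim \vert\vert\vert e\vert\vert\vert$ (again using that $D^2$ applied to the piecewise-linear part vanishes), so that the $h^\alpha$ weight in $\mathcal{R}_\Gamma$ survives and one gets $C_2 h^{\alpha}\|\mathcal{R}_\Gamma\|_{1/s,\Gamma_1\cup\Gamma_2}\cdot\vert\vert\vert e\vert\vert\vert$. The $f_0-f_{0,h}$ terms are bounded directly by \eqref{Error1BB} against $\vert\vert\vert e-\pi_h e\vert\vert\vert\lesssim \vert\vert\vert e\vert\vert\vert$, yielding $\mathcal{O}(h^\beta)\cdot\vert\vert\vert e\vert\vert\vert$. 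Dividing through by $\vert\vert\vert e\vert\vert\vert$ produces \eqref{Resid2}.

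The main obstacle, I expect, is the bookkeeping of the boundary terms: one must carefully track which edge/face contributions arise when integrating by parts in $-(\triangle\hat{E}_h,e-\pi_h e)$ and in the divergence couplings $(\nabla\cdot(\varepsilon\hat{E}_h)-\nabla\cdot\hat{E}_h,\nabla\cdot(e-\pi_h e))$, verify that interelement jump terms vanish (here they do, because $\hat{E}_h$ is piecewise linear so $\triangle\hat{E}_h\equiv 0$ on each $K$ and normal-derivative jumps of a continuous piecewise-linear field across faces must be handled or shown to vanish in the relevant geometry), and confirm that the residual $\mathcal{R}_\Gamma$ as defined carries exactly the weight $h^{-\alpha}$ needed to balance against the $h^{\alpha}$ emerging from the trace inequality. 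A secondary delicate point is the choice of weighted norms: the constants $C_1,C_2$ are allowed to depend on $\varepsilon$ and $s$, so one must make sure the weights $\sqrt{\varepsilon}$, $\sqrt{\varepsilon-1}$, $1/\sqrt{s}$ are distributed consistently between $\mathcal{R},\mathcal{R}_\Gamma$ and the triple norm, exactly as in the continuity estimate \eqref{Coercivity2} and the linear-form bound \eqref{coercivity3}. Once those alignments are pinned down, the estimate follows from the same Cauchy--Schwarz plus interpolation machinery already used in the a priori proof.
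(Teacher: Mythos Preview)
Your plan is essentially correct and close to the paper's, with one structural difference. The paper does not insert $e-\pi_h e$ everywhere via Galerkin orthogonality; instead it first integrates by parts in $a(e,e)$ to reach the strong form, splits $e=\hat E-\hat E_h$ in the first slot, and arrives at $(\mathcal R,e)+\langle h^\alpha\mathcal R_\Gamma,e\rangle_{\Gamma_1\cup\Gamma_2}$ together with the data-error terms and an extra boundary piece $J_3=\langle\partial_\nu((\varepsilon-1)e),e\rangle_{\Gamma_1\cup\Gamma_2}$, which is then handled via the boundary conditions on $\Gamma_1\cup\Gamma_2$ (under Assumption~1 this term in fact vanishes, as you correctly note). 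Only afterwards is the orthogonality $\mathcal R(\hat E_h)\perp\mathbf W_h^E$ invoked, and it is applied to the interior residual \emph{alone}: the boundary residual remains paired with $e$, so that $\langle h^\alpha\mathcal R_\Gamma,e\rangle\le\|h^\alpha\mathcal R_\Gamma\|_{1/s,\Gamma}\,\|e\|_{s,\Gamma}\le\|h^\alpha\mathcal R_\Gamma\|_{1/s,\Gamma}\,\vert\vert\vert e\vert\vert\vert$ follows immediately from the definition of the triple norm, with no trace or interpolation step on $\Gamma$. Your route, pairing $\mathcal R_\Gamma$ with $e-\pi_h e$, also reaches the stated bound but at the cost of an additional boundary-interpolation argument that the paper avoids.

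Two cautions on your proposed execution. First, the trace estimate \eqref{Tracs1} you cite would produce $\|D^2 e\|$ on the right, which equals $\|D^2\hat E\|$ elementwise (since $\hat E_h$ is piecewise linear) and is therefore an a~priori quantity; to get the a~posteriori-friendly bound $\|e-\pi_h e\|_{\Gamma}\le C\,h^{1/2}\|\nabla e\|$ you would need a Cl\'ement or Scott--Zhang quasi-interpolant rather than the nodal one. Second, your assertion that the interelement normal-derivative jumps of $\hat E_h$ vanish is not correct: for continuous $P_1$ fields these jumps are generically nonzero and are precisely the edge residuals of standard a~posteriori theory. The paper's proof also elides this point (it writes $\Delta\hat E_h$ and asserts $\mathcal R(\hat E_h)\perp\mathbf W_h^E$ without tracking jumps), so this is a shared omission rather than a divergence from the paper, but you should not expect your displayed identity to hold without those jump contributions.
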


\begin{proof}

We shall use the weak form emerging from the triple norm of the error 
where the contribution from the $\partial_\nu$ 
 will be canceled using Green's formula to the 
Laplacian 
\begin{equation}\label{Error1AA}
\begin{split}
\vert\vert\vert e \vert\vert\vert^2
= &
 (s^2\varepsilon e,\, e)+(\nabla e,\, \nabla e)
 +(\nabla\cdot ((\varepsilon-1)e), 
\nabla\cdot e)+\langle se,\, e\rangle_{\Gamma_1\cup\Gamma_2} \\
&-\langle  \nabla\cdot(\varepsilon e), e\rangle_\Gamma 
+\langle  \nabla\cdot(\varepsilon e), e\rangle_\Gamma 
:=\sum_{j=1}^4 I_j. 
\end{split} 
\end{equation}
Now by the Green's formula and the fact that 
$\partial_\nu e\equiv 0$ on $\Gamma_3$, we have 
\begin{equation}\label{Error1AB}
\begin{split}
I_2+I_3 =& 
 -(\Delta e, e) 
-\Big(\nabla(\nabla\cdot((\varepsilon -1)e)),e\Big)\\
&+\langle \partial_\nu e,e\rangle_{\Gamma_1\cup\Gamma_2}
+\langle \partial_\nu((\varepsilon -1)e),e\rangle_{\Gamma_1\cup\Gamma_2}\\
=& -(\Delta e, e) 
-\nabla(\nabla\cdot((\varepsilon -1)e),e) 
+\langle \partial_\nu(\varepsilon e),\, e\rangle_{\Gamma_1\cup\Gamma_2}.
\end{split} 
\end{equation}
Hence 
\begin{equation}\label{Error1AC}
\begin{split}
\vert\vert\vert e \vert\vert\vert^2= &
 (s^2\varepsilon e,\, e) -(\Delta e, e) 
-\nabla(\nabla\cdot((\varepsilon -1)e),e) \\
&+\langle\partial_\nu (\varepsilon e),\, e\rangle_{\Gamma_1\cup\Gamma_2}
+\langle s e, e\rangle_{\Gamma_1\cup\Gamma_2}
\end{split} 
\end{equation}
Thus, recalling the definition of the residuals 
${\mathcal R}$ and ${\mathcal R_\Gamma}$, we can write 
\begin{equation}\label{Error1A}
\begin{split}
\vert\vert\vert e \vert\vert\vert^2=&
 (s^2\varepsilon \hat{E}-\Delta \hat{E}- 
\nabla(\nabla\cdot((\varepsilon-1)\hat{E}),\, e)+
\langle \partial_\nu(\varepsilon \hat E), e\rangle_{\Gamma_1\cup\Gamma_2}
+\langle s\hat{E},\, e\rangle_{\Gamma_1\cup\Gamma_2}\\
&-( s^2\varepsilon \hat{E}_h-\Delta_h \hat{E}- 
\nabla_h(\nabla\cdot((\varepsilon-1)\hat{E}),\, e)
-\langle \partial_\nu(\varepsilon \hat {E}_h), e\rangle_{\Gamma_1\cup\Gamma_2}
-\langle s\hat{E}_h,\, e\rangle_{\Gamma_1\cup\Gamma_2}\\
=&(s\varepsilon f_0, \, e)+({\mathcal R}(\hat{E}_h),\, e)- 
(s\varepsilon f_{0,h}, \, e)
+\langle\partial_\nu((\varepsilon-1)\hat{E}), e\rangle_{\Gamma_1\cup\Gamma_2} 
+\langle\partial_\nu\hat{E}, e\rangle_{\Gamma_1\cup\Gamma_2}\\ 
&+\langle s\hat{E},\, e\rangle_{\Gamma_1\cup\Gamma_2} 
-\langle\partial_\nu((\varepsilon-1)\hat{E}_h), e\rangle _{\Gamma_1\cup\Gamma_2} 
-\langle\partial_\nu\hat{E}_h, e\rangle_{\Gamma_1\cup\Gamma_2}- \langle s\hat{E}_h,\, e\rangle_{\Gamma_1\cup\Gamma_2}\\
=& (s\varepsilon (f_0-f_{0,h}), \, e)+({\mathcal R}(\hat{E}_h),\, e)
+\langle\partial_\nu((\varepsilon-1)(\hat{E}-\hat{E_h})), e\rangle _{\Gamma_1\cup\Gamma_2}\\
&+\langle{h^{\alpha}\mathcal R}_{\Gamma}(\hat{E}_h), \, e\rangle_{\Gamma_1\cup\Gamma_2}
+\langle f_0-f_{0,h}, e\rangle_{\Gamma_1\cup\Gamma_2}, 
\end{split}
\end{equation}
where the contributions from the $\Gamma_3$ to the boundary terms are all 
identically zero. 
Further, we use the orthogonality relation 
${\mathcal R}(\hat{E}_h)\perp {\mathbf W}_h^E(\Omega)$, so that 
\begin{equation}\label{Error1AA}
\begin{split}
\vert\vert\vert e \vert\vert\vert^2= &
 (\sqrt \varepsilon( f_0-f_{0,h}), \,s\sqrt\varepsilon e)+
({\mathcal R}(\hat{E}_h),\, e-\pi_h e)\\
&+\langle 
\partial_\nu((\varepsilon-1)(\hat{E}-\hat{E_h})),  e\rangle _{\Gamma_1\cup\Gamma_2}
+\langle h^{\alpha}{\mathcal R}_{\Gamma}(\hat{E}_h), \, 
e\rangle_{\Gamma_1\cup\Gamma_2}
\\
&+
\langle \frac 1{\sqrt s}( f_0-f_{0,h}),\sqrt s e\rangle_{\Gamma_1\cup\Gamma_2}:=\sum_{j=1}^5 J_j
\end{split}
\end{equation}
Now we estimate each $J_j$-term separately 
$$
J_1:=  (\sqrt \varepsilon( f_0-f_{0,h}), \,s\sqrt\varepsilon e) 
\le \parallel  f_0-f_{0,h}\parallel_\varepsilon \cdot 
\parallel e\parallel_{\varepsilon s^2}
$$
Using the interpolation estimates 
\begin{equation}\label{J2J3}
\begin{split}
J_2+J_4:= &({\mathcal R}(\hat{E}_h),\, e-\pi_h e)+ 
\langle h^{\alpha}{\mathcal R}_{\Gamma}(\hat{E}_h), \, 
e\rangle_{\Gamma_1\cup\Gamma_2}\\ 
\le & 
\Big(\parallel{h \mathcal R}(\hat{E}_h)\parallel \parallel \nabla e \parallel + 
\parallel{h^\alpha \mathcal R}_{\Gamma}(\hat{E}_h)\parallel_{1/s,\Gamma_1\cup\Gamma_2} 
 \parallel e \parallel_{s,\Gamma_1\cup\Gamma_2} \Big). 
\end{split}
\end{equation}
Finally 
\begin{equation}\label{J5}
J_5:= \langle \frac 1{\sqrt s}( f_0-f_{0,h})+
\sqrt s e\rangle _\Gamma 
\le 
\parallel f_0-f_{0,h} \parallel_{1/s. \Gamma_1\cup\Gamma_2}
\parallel e \parallel_{s,\Gamma_1\cup\Gamma_2}.
\end{equation}
It remains to bound the term $J_3$, to this end we use the fact that 
$\partial_\nu\varepsilon=0$ and hence 
\begin{equation*}\label{J3}
\begin{split}
J_3 &=\langle 
\partial_\nu((\varepsilon-1)(\hat{E}-\hat{E_h})),  e\rangle _\Gamma 
=\langle (\partial_\nu \varepsilon)(\hat{E}-\hat{E_h}),  e\rangle _\Gamma +
\langle (\varepsilon-1)\partial_\nu(\hat{E}-\hat{E_h}), e\rangle _\Gamma\\
&=\langle (\varepsilon-1)\partial_\nu(\hat{E}-\hat{E_h}), e\rangle _\Gamma.
\end{split}
\end{equation*}
Using both continuous and discrete boundary conditions we have 
$$
\partial_\nu(\hat{E}-\hat{E_h})=(f_0-f_{0,h})-s(\hat{E}-\hat{E_h})\qquad 
\mbox{on}\quad \Gamma_1\cup\Gamma_2, 
$$
and 
$$
\partial_\nu(\hat{E}-\hat{E_h})=0\qquad 
\mbox{on}\quad \Gamma_3, 
$$
we can estimate $J_{3}$ as follows      
\begin{equation}\label{J32}
\begin{split}
J_{3}&= 
\langle (\varepsilon-1)\partial_\nu(\hat{E}-\hat{E_h}), e\rangle_{\Gamma_1\cup\Gamma_2}\\
&= \langle (\varepsilon-1)( f_0-f_{0,h})\frac 1{\sqrt s}, 
{\sqrt s} e\rangle_{\Gamma_1\cup\Gamma_2}-  
\langle (\varepsilon-1)s e, e\rangle_{\Gamma_1\cup\Gamma_2} \\
&\le \parallel f_0-f_{0,h}\parallel_{(\varepsilon -1)^2/s, \Gamma_1\cup\Gamma_2}
\parallel e \parallel_{s,\Gamma_1\cup\Gamma_2} - 
\min \abs{\varepsilon-1}\parallel e \parallel_{s,\Gamma_1\cup\Gamma_2}. 
\end{split}
\end{equation}
Summing up we have 
\begin{equation}\label{Summing1}
 \begin{split}
 \vert\vert\vert e \vert\vert\vert^2 \le &
 \parallel h {\mathcal R}(\hat{E}_h)\parallel \parallel \nabla e \parallel
+\parallel h^\alpha {\mathcal R}_\Gamma(\hat{E}_h)
\parallel_{1/s, \Gamma_1\cup\Gamma_2} 
\parallel e \parallel_{s,\Gamma_1\cup\Gamma_2} \\
&+ \parallel f_0-f_{0,h} \parallel_\varepsilon 
\parallel e\parallel_{\varepsilon s^2}+
\parallel f_0-f_{0,h} \parallel_{1/s, \Gamma_1\cup\Gamma_2} 
\parallel e\parallel_{s,\Gamma_1\cup\Gamma_2}\\
&+  \parallel f_0-f_{0,h}\parallel_{(\varepsilon -1)^2/s, \Gamma_1\cup\Gamma_2}
\parallel e \parallel_{s,\Gamma_1\cup\Gamma_2} - 
\min \abs{\varepsilon-1}\parallel e \parallel_{s,\Gamma_1\cup\Gamma_2}.
\end{split}
\end{equation}
Now recalling  \eqref{Error1BB}, 
by a ``kick-back'' of the negative term 
$-\min \abs{\varepsilon-1}\parallel e \parallel_{s,\Gamma}$ or ignoring it 
we end up with 
\begin{equation}\label{Error1AE} 
\vert\vert\vert e \vert\vert\vert^2\le  
C\Big(\parallel{ h\mathcal R}(\hat{E}_h)\parallel+
\parallel h^{\alpha}{\mathcal R}_\Gamma(\hat{E}_h)
\parallel_{1/s, \Gamma_1\cup\Gamma_2} +{\mathcal O}(h^\beta)\Big)
\vert\vert\vert e \vert\vert\vert.
\end{equation}
This yields the desired result 
\begin{equation}\label{Error1AF}
\vert\vert\vert e \vert\vert\vert\le 
C\Big(\parallel{ h\mathcal R}(\hat{E}_h)\parallel+
\parallel h^{\alpha}{\mathcal R}_\Gamma(\hat{E}_h)
\parallel_{1/s,\Gamma_1\cup\Gamma_2} +{\mathcal O}(h^\beta)\Big), 
\end{equation} 
where assuming $\alpha\approx\beta\approx 1$, \eqref{Error1AF} 
is optimal. 
\end{proof}
\subsection{A duality approach} 
In Theorem \ref{Apriori} 
 we made a direct a priori error estimate. 
The a posteriori error estimates in Theorem \ref{TheoApost1} 
rely on bounds depending on the residual of the 
approximate solution,  
with no reference to the dual problem. There is yet another 
approach based on an adequate 
dual form of the original problem. This can be studied both in 
a priori as well as a posteriori regi. 
Below we give a version of the a priori estimate: Theorem \ref{Apriori}  
based on a dual problem formulation. This approach is quite similar to 
that of the proof of Theorem \ref{Apriori} and is included in here 
just in order to show 
the dual technique, as an alternative approach for the cases when 
establishing a reasonable convergence via Theorem \ref{Apriori} becomes too expensive. 
Here we assume that the dual problem has a sufficiently smooth solution 
in the weighted Sobolev space with a norm that naturally arises along the
estimates. Now, we define the differential operator ${\mathcal A}$ 
by
$$
({\mathcal A} \varphi, {\textbf v})=a( \varphi,  {\textbf v}), \qquad \forall 
\varphi,\,\, {\textbf v}\in W_h^E(\Omega), 
$$
and let ${\mathcal A}:={\mathcal A}_\Omega +{\mathcal A}_\Gamma$ where 
$$
{\mathcal A}_\Omega \varphi:= 
s^2\varepsilon (x)\varphi(x,s)-\Delta \varphi(x,s)
-\nabla(\nabla\cdot(\varepsilon(x)-1)\varphi(x,s)),\quad x\in \Omega
$$
is a self adjoint operator, whereas 
$$
{\mathcal A}_\Gamma\varphi(x,s):=\partial_\nu (\varepsilon (x)\varphi(x,s))+s \varphi(x,s),\qquad 
x\in \Gamma_1\cup\Gamma_2,
$$
is non-self adjoint with the adjoint ${\mathcal A}_\Gamma^\prime$ given by 
$$
{\mathcal A}_\Gamma^\prime\varphi(x,s):=
-\partial_\nu (\varepsilon (x)\varphi(x,s))+s \varphi(x,s),\qquad 
x\in \Gamma_1\cup\Gamma_2.
$$
To proceed 
we consider the dual problem 
\begin{equation}\label{Dual1}
\left\{
\begin{array}{ll}
\displaystyle {\mathcal A}_\Omega \varphi (x,s)=e(x,s),\qquad & x\in \Omega\\\\
\displaystyle {\mathcal A}_\Gamma^\prime\varphi(x,s)=
-\partial_\nu (\varepsilon (x)e(x,s))+ se(x,s), & x\in  \Gamma_1\cup\Gamma_2\\\\
\displaystyle \partial_\nu\varphi(x,s)=0,\qquad & x\in \Gamma_3. 
\end{array}
\right.
\end{equation}
\begin{proposition}[Error estimate using dual problem] \label{Prop1Apost}    
Let $\varphi\in{\mathbf W}_E^2:=H^2_w (\Omega)\cap H^2_{1/s}(\Gamma)$ in the 
above duality formulation. Then, we have the following error estimate: 
$$
 \vert\vert\vert e \vert\vert\vert\le 
h^2\parallel\varphi\parallel_{H^2_{w(\Omega)}}+h^{3/2} 
\parallel\varphi\parallel_{H^2_{1/s}(\Gamma)},
$$              
\end{proposition}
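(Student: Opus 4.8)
The plan is to run the duality (Aubin--Nitsche type) counterpart of the argument used for Theorem~\ref{Apriori}: rather than testing the error equation against the interpolant of $\hat E$, I would test the dual problem \eqref{Dual1} against the error $e$ itself. Concretely, multiply the volume equation $\mathcal A_\Omega\varphi=e$ by $e$, integrate over $\Omega$, and apply Green's formula to the $-\Delta\varphi$ and $-\nabla(\nabla\cdot((\varepsilon-1)\varphi))$ terms so as to move all spatial derivatives off $\varphi$ onto $e$. Because $\mathcal A_\Omega$ is self adjoint, the interior contributions reassemble --- using the defining relation $(\mathcal A\psi,w)=a(\psi,w)$ --- into $a(e,\varphi)$; the surface terms produced on $\Gamma_3$ vanish since $\partial_\nu\varphi=0$ there, and those on $\Gamma_1\cup\Gamma_2$ are exactly $\langle e,\mathcal A_\Gamma'\varphi\rangle_{\Gamma_1\cup\Gamma_2}$ with the non-self-adjoint (sign-flipped normal derivative) boundary operator, which by the second line of \eqref{Dual1} equals $\langle e,\,-\partial_\nu(\varepsilon e)+se\rangle_{\Gamma_1\cup\Gamma_2}$. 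Using $\partial_\nu\varepsilon=0$ (Remark~\ref{Remark2}) to discard the spurious surface piece $\langle(\partial_\nu\varepsilon)e,e\rangle_\Gamma$ and matching against the surface part of $a(\cdot,\cdot)$, this should produce the representation identity $\vert\vert\vert e\vert\vert\vert^2=a(e,\varphi)$.

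Next I would insert the componentwise ${\mathbf W}_h^E(\Omega)$-interpolant $\pi_h\varphi$ and invoke the Galerkin orthogonality \eqref{GOrtho} to replace $\varphi$ by $\varphi-\pi_h\varphi$, so that $\vert\vert\vert e\vert\vert\vert^2=a(e,\varphi-\pi_h\varphi)$; the continuity bound \eqref{Coercivity2} then gives $\vert\vert\vert e\vert\vert\vert^2\le\vert\vert\vert e\vert\vert\vert\,\vert\vert\vert\varphi-\pi_h\varphi\vert\vert\vert$, hence $\vert\vert\vert e\vert\vert\vert\le\vert\vert\vert\varphi-\pi_h\varphi\vert\vert\vert$ after cancelling one factor of $\vert\vert\vert e\vert\vert\vert$. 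It then remains to estimate the triple norm of the interpolation error of $\varphi$ term by term: standard local interpolation estimates give $C\,h^2\|D^2\varphi\|$ (against the appropriate weights) for the $s^2\varepsilon$-weighted $L_2$ term and for the $(\varepsilon-1)$-weighted divergence term, $C\,h\|D^2\varphi\|$ for the gradient term, and --- exactly as in \eqref{Tracs1} --- the boundary term $\|\varphi-\pi_h\varphi\|_{s,\Gamma_1\cup\Gamma_2}$ is controlled by the (scaled) trace inequality together with the two interior interpolation bounds, yielding $C\,h^{3/2}\|D^2\varphi\|_s$. Folding the $(\varepsilon,s)$- and $h$-dependent factors into the weights exactly as $w$ is assembled in Theorem~\ref{Apriori} then collects these contributions into $\vert\vert\vert\varphi-\pi_h\varphi\vert\vert\vert\le h^2\|\varphi\|_{H^2_{w}(\Omega)}+h^{3/2}\|\varphi\|_{H^2_{1/s}(\Gamma)}$, which is the asserted bound.

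The step I expect to be the main obstacle is the first one: verifying rigorously that Green's formula applied to $\mathcal A_\Omega\varphi=e$ regenerates exactly $a(e,\varphi)$ with no leftover volume or surface terms, and, crucially, that what emerges on the left really is the full triple norm $\vert\vert\vert e\vert\vert\vert^2$ rather than merely $\|e\|_{L_2(\Omega)}^2$ plus boundary contributions (this reconciliation is what the stated inequality needs in order to cancel a power of $\vert\vert\vert e\vert\vert\vert$). This hinges on (i) correctly identifying the formal adjoint of the non-symmetric term $-\nabla(\nabla\cdot((\varepsilon-1)\varphi))$ and checking that $\partial_\nu\varepsilon=0$ annihilates the extra surface term it generates, and (ii) confirming that the boundary operator $\mathcal A_\Gamma'$ in \eqref{Dual1} is precisely the one dictated by that integration by parts, so that the natural boundary conditions of the primal and dual problems are compatible. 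A secondary point worth recording is that the estimate uses --- and only uses --- the assumed regularity $\varphi\in{\mathbf W}_E^2=H^2_w(\Omega)\cap H^2_{1/s}(\Gamma)$, which we do not prove; and that the $h^{3/2}$ loss on $\Gamma$ is the trace-theorem-limited term already flagged in the Remark following Theorem~\ref{Apriori}, which is why it, and not the interior $h^2$, governs the $\Gamma$-weighted part of the bound.
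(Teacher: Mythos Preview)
Your proposal follows essentially the same route as the paper. The paper also starts from the assertion
\[
\vert\vert\vert e\vert\vert\vert^2=(\mathcal A_\Omega\varphi,e)+(\mathcal A_\Gamma\varphi,e)=a(\varphi,e),
\]
invokes Galerkin orthogonality to replace $\varphi$ by $\varphi-\mathbf v$ with $\mathbf v$ the nodal interpolant, and then applies interpolation estimates term by term (your detour through the continuity bound \eqref{Coercivity2} is equivalent to their direct Cauchy--Schwarz on each summand). The concern you flag as ``the main obstacle'' --- that the dual-problem testing really reproduces the full triple norm rather than just $\|e\|_{L_2}^2$ plus boundary leftovers --- is precisely the step the paper compresses into the phrase ``by a multiple use of Green's formula and a Galerkin orthogonality relation'' without further detail; you are right to record it as the delicate point.

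One small slip: you write that the $(\varepsilon-1)$-weighted divergence term of $\varphi-\pi_h\varphi$ is $O(h^2)$. Since $\nabla\cdot$ is a first-order operator, the interpolation estimate there is only $O(h)$, and indeed the paper records $\|hD^2\varphi\|_{\varepsilon-1}$ for that term. This does not change the argument structure (the gradient term already forces the interior rate down to $O(h)$, as the paper itself notes in the closing sentence of the proof), but it is worth correcting so your term-by-term accounting matches the paper's display \eqref{Error1D}.
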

\begin{proof}
For
$\varphi\in{\mathbf W}_E^2$  and  
${\textbf v}$ the nodal interpolant of $\varphi$ in a finite element 
partition ${\mathcal T}_h$ of the computational domain $\Omega$, we have by a multiple 
use of Green's formula and 
a Galerkin orthogonality relation that 
\begin{equation}\label{Error1D}
\begin{split}
\vert\vert\vert e \vert\vert\vert^2= & 
\parallel{e}\parallel_{s^2\varepsilon}^2+
\parallel{\nabla e}\parallel^2+
\parallel{\nabla\cdot e}\parallel_{\varepsilon -1}^2
+\parallel{e}\parallel_{s, \Gamma_1\cup\Gamma_2}^2\\ 
=& ({\mathcal A}_\Omega \varphi, e)+ ( {\mathcal A}_\Gamma\varphi, e)\\
=& (\varphi, e)_{s^2\varepsilon}+
 (\nabla \varphi, \nabla e)+
(\nabla \cdot( (\varepsilon -1)) \varphi, \nabla\cdot e)
+\langle\varphi, e\rangle_{s,\Gamma_1\cup\Gamma_2}\\
=& (\varphi-{\textbf v} , e)_{s^2\varepsilon}+ 
 (\nabla (\varphi -{\textbf v}), \nabla e)\\
&+
(\nabla \cdot ((\varepsilon -1)  \varphi-{\textbf v}), \nabla\cdot e)
+\langle \varphi-{\textbf v} , e\rangle_{s,\Gamma_1\cup\Gamma_2}\\
\le & \parallel h^2 D^2\varphi\parallel_{s^2\varepsilon}
\parallel{e}\parallel_{s^2\varepsilon}+
\parallel{h D^2\varphi}\parallel \parallel{\nabla e}\parallel \\
&+
\parallel{h D^2\varphi}\parallel_{\varepsilon -1}  
\parallel{\nabla\cdot e}\parallel_{\varepsilon -1} 
 +\parallel{h^{3/2} D^2\varphi}\parallel_{s, \Gamma_1\cup\Gamma_2}
\parallel{e}\parallel_{s, \Gamma_1\cup\Gamma_2}\\
\le & 
\Big(\parallel h^2 D^2\varphi\parallel_{s^2\varepsilon}
+\parallel{h D^2\varphi}\parallel 
+\parallel{h D^2\varphi}\parallel_{\varepsilon -1}  
+\parallel{h^{3/2} D^2\varphi}\parallel_{s, \Gamma_1\cup\Gamma_2}\Big)
\vert\vert\vert{e}\vert\vert\vert.
\end{split}
\end{equation}
Thus 
$$
\vert\vert\vert e \vert\vert\vert\le 
\parallel h \varphi\parallel_{H^2_{w(\Omega)}}+
\parallel h^{3/2} \varphi\parallel_{H^2_{s}(\Gamma)},
$$
which, is exactly the same estimate as 
in \eqref{AprioriEE2} and  \eqref{Error1AF}. Hence, the improved estimate 
(by ${\mathcal O}(h^{1/2})$)
 of order ${\mathcal O}(h^{3/2})$, due to the trace theorem is not helpful, 
due to the fact that the triple norm involves $L_2$-norm of the gradient 
of the error and the final, optimal, estimate is of ${\mathcal O}(h)$. 
\end{proof}
\subsection{Adaptivity algorithm}  
In this part we outline the adaptivity algorithm 
for the a posteriori error estimate given by
\eqref{Error1AF}. To this end we consider an {\sl error tolerance} TOL 
and seek to construct a discrete scheme that would guarantee the error bound 
\begin{equation}\label{Adaptiv1}
\vert\vert\vert e \vert\vert\vert\le \mbox{TOL}.
\end{equation}
A concise adaptivity procedure is introduced as in the following steps
\begin{itemize}

\item[I] Given the dielectric permittivity function $\varepsilon (x)$,  
the initial data $f_0$ and the corresponding boundary conditions as in 
\eqref{E_gauge} and \eqref{2.3}. 

\item[II]
Consider a mesh size $h$, choose 
$\alpha\approx\beta\approx 1$ and 
compute the 
approximate initial data $f_{0,h}$ and the corresponding approximate solution 
$\hat{E}_h$, using the finite element scheme \eqref{eq6}.

\item[III] Use the definition of the residuals, viz. \eqref{Resid1} and compute 
${\mathcal R}(\hat{E}_h)$ and ${\mathcal R}_\Gamma(\hat{E}_h)$

\item[IV]
If 
\begin{equation}\label{Error1Adapt}
\vert\vert\vert e \vert\vert\vert\le 
\Big(\parallel{ h\mathcal R}(\hat{E}_h)\parallel+
\parallel h^{\alpha}{\mathcal R}_\Gamma(\hat{E}_h)
\parallel_{1/s,\Gamma_1\cup\Gamma_2} +{\mathcal O}(h^\beta)\Big)\le \mbox{TOL}, 
\end{equation} 
(observe that we hide the constant $C$ in TOL)
then stop and accept the approximate solution $\hat{E}_h$. 
Otherwise, refine the 
mesh in the parts of the domain $\Omega$ and the boundary $\Gamma$ where 
either or both of ${\mathcal R}(\hat{E}_h)$ and 
${\mathcal R}_\Gamma(\hat{E}_h)$
are large and go to step II with this new mesh parameter.   
\end{itemize}

\section{Numerical examples}

We perform numerical tests  in the computational domain $\Omega =
[0,1] \times [0,1]$, with 
$\Omega_1:=[0.25,\,0.75] \times [0.25,\, 0.75]$. 
 The source data (the right hand side) in the model problem (2.4) is chosen such that the function
\begin{equation}\label{eq1}
  \begin{split}
    E_1 &=    \frac{1}{\varepsilon} 2 \pi \sin^2 \pi x  \cos \pi y  \sin \pi y ~ \frac{t^2}{2}, \\
   E_2 &=  - \frac{1}{\varepsilon}   2 \pi   \sin^2 \pi y \cos \pi x   \sin \pi x   ~\frac{t^2}{2}
  \end{split}
\end{equation}
is the exact solution of this problem. After application of the 
Laplace transform (2.2) to  \eqref{eq1}
the exact solution of the transformed model problem (2.8) will be
\begin{equation}\label{modp2_1}
  \begin{split}
    E_1 &=    \frac{2}{ s^3 \varepsilon} \pi \sin^2 \pi x  \cos \pi y  \sin \pi y , \\
   E_2 &=  - \frac{2}{ s^3 \varepsilon}    \pi   \sin^2 \pi y \cos \pi x   \sin \pi x. 
  \end{split}
\end{equation}

In \eqref{eq1} the function $\varepsilon$  is defined for an integer $m > 1$ as
\begin{equation}\label{eps}
  \varepsilon(x,y)= \left \{
  \begin{array}{ll}
    1 + \sin^m \pi (2x-0.5) \cdot \sin^m \pi (2y-0.5) & 
\textrm{in $\Omega_1$}, \\
    1 &  \textrm{in $\Omega_2:=\Omega\setminus\Omega_1$}.
  \end{array}
  \right.
\end{equation}
The solution defined in \eqref{eq1}  satisfies the homogeneous initial conditions. We chose homogeneous boundary conditions at the boundary $\Gamma=\partial\Omega$
 of the computational domain $\Omega$.
 Figure \ref{fig:F1}   shows the function $\varepsilon$ for different  values of $m$.

 We discretize the computational domain $\Omega\times (0,T)$ denoting
 by ${{\mathcal T}_h}_l = \{K\}$ a partition of the domain $\Omega$ into
 triangles $K$ of sizes $h_l= 2^{-l}, l=1,...,6$.  Numerical tests are
 performed for different $m=2,..., 9$ in \eqref{eps} and the relative
 errors are then measured in $L_2$-norm and the
 $H^1$-norms, respectively, which are computed as
\begin{align}
e_l^1 &=  \frac{\|E-E_h\|_{L_2}}{\| E\|_{L_2}},\\
e_l^2 &=  \frac{\|\nabla (E-E_h) \|_{L_2}}{\| \nabla E\|_{L_2}},  
\end{align}
where, as well as in the sequel 
\begin{equation}\label{AbsE}
\abs{E}:=\sqrt{E_{1}^2+E_{2}^2}\qquad \abs{E_h}:=\sqrt{E_{1h}^2+E_{2h}^2}. 
\end{equation}
Figures \ref{fig:F2}-\ref{fig:F5} present convergence of P1
finite element  numerical
scheme for the function $\varepsilon$ defined by \eqref{eps}   for 
$m=2,...,9$, see Figure \ref{fig:F1}
for this function.  Tables 1-4 present convergence  rates $q_1, q_2$   for $m=2,5,7,9$ which are computed accordingly
\begin{equation}
  \begin{split}
  q_1 &= \frac{\log \left( \frac{  el^1_h }{  el^2_{2h}} \right)}{\log(0.5)},\\
  q_2 &= \frac{\log \left( \frac{  el^2_h}{ el^2_{2h}} \right)}{\log(0.5)},
  \end{split}
  \end{equation}
where $el^{1,2}_h, el^{1,2}_{2h} $ are computed relative norms $el^{1,2} $ on the 
mesh ${\mathcal T}_h$ with the mesh size $h$ and $2h$, respectively.
Similar convergence rates are obtained for $m=3,4,5,8$.
Using  these tables and figures  we observe that our scheme behaves like a 
first order method  for $H^1(\Omega)$-norm
and second order method for $L^2(\Omega)$-norm  for all values of $m$. 
These results are all in good agreement with the analytic 
estimates derived in Theorems 1-3.

\begin{figure}
\begin{center}
\begin{tabular}{cccc}
  {\includegraphics[scale=0.2, clip=]{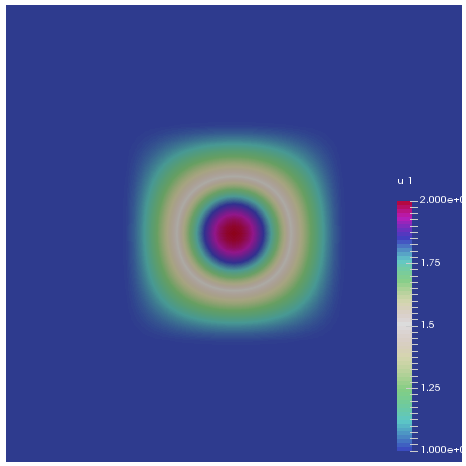}} &
  {\includegraphics[scale=0.2, clip=]{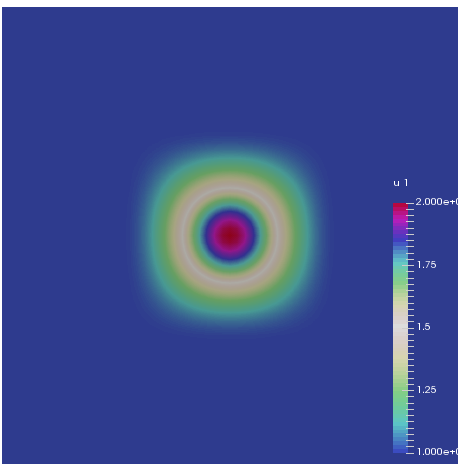}} &
   {\includegraphics[scale=0.2, clip=]{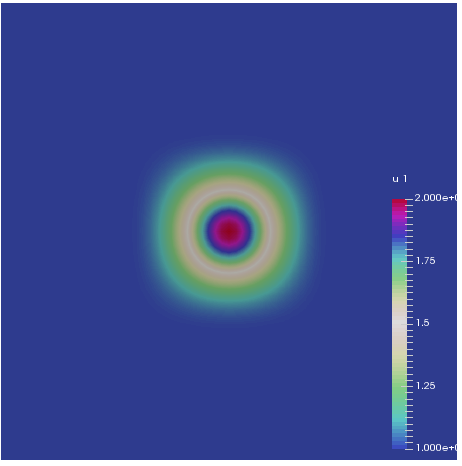}} &
  {\includegraphics[scale=0.2, clip=]{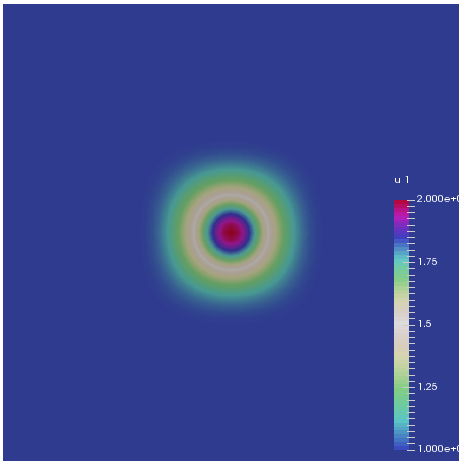}} \\
  $m=2$ & $m=3$ & $m=4$ & $m=5$  \\
    {\includegraphics[scale=0.2, clip=]{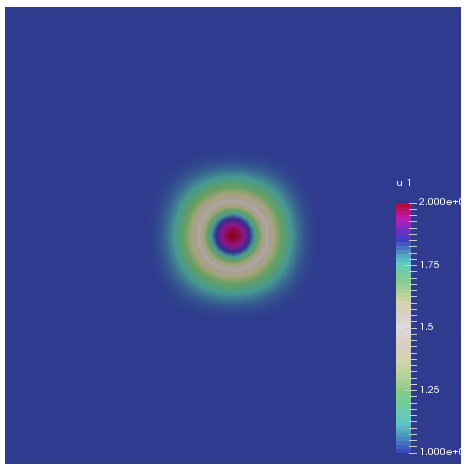}} &
  {\includegraphics[scale=0.2, clip=]{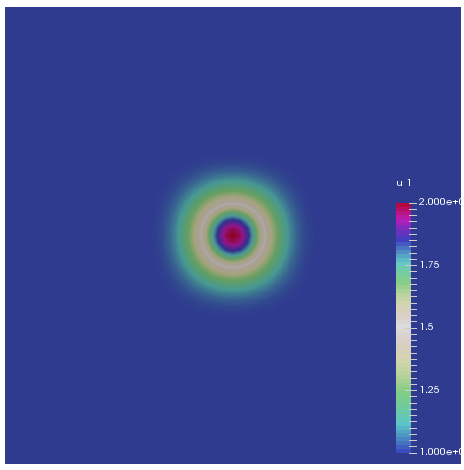}} &
   {\includegraphics[scale=0.2, clip=]{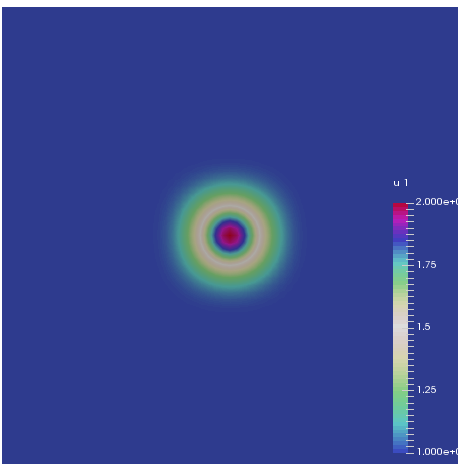}} &
  {\includegraphics[scale=0.2, clip=]{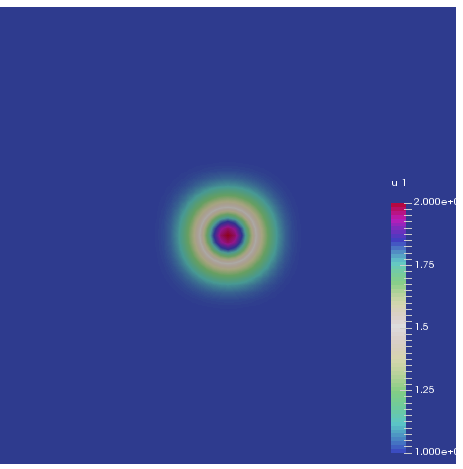}} \\
  $m=6$ & $m=7$ & $m=8$ & $m=9$  \\
\end{tabular}
\end{center}
\caption{{\protect\tiny \emph{The function $\varepsilon(x,y)$ in the domain $\Omega=[0,1] \times[0,1]$  for   different $m$ in \eqref{eps}. }}}
\label{fig:F1}
\end{figure}

\begin{table}[h!] 
\center
\begin{tabular}{ l  l l l l l l  }
\hline
$l$ &  $nel$  & $nno$ &  $e_l^1$  & $q_1$ & $e_l^2$  & $q_2$ \\
\hline 
$1$ & $8$ & $9$       &  $2.71\cdot 10^{-2}$  &              & $8.60\cdot 10^{-2}$ &               \\
$2$ & $32$ & $25$      &  $6.66\cdot 10^{-3}$   &  $2.02$   & $3.25\cdot 10^{-2}$  &  $1.40$  \\
$3$ & $128$ & $81$    &  $1.78\cdot 10^{-3}$  &  $1.90$  &  $1.75\cdot 10^{-2}$  &  $8.99\cdot 10^{-1}$  \\
$4$ & $512$ & $289$   &  $4.13\cdot 10^{-4}$ &   $2.11$  & $1.02\cdot 10^{-2}$  &  $7.79\cdot 10^{-1}$  \\
$5$ & $2048$ & $1089$ &  $1.05\cdot 10^{-4}$   & $1.97$    &  $5.29\cdot 10^{-3}$   & $9.42\cdot 10^{-1} $  \\
$6$ & $8192$ & $4225$ &  $2.65\cdot 10^{-5}$  &   $1.99$  & $2.70\cdot 10^{-3}$  &  $9.69\cdot 10^{-1}  $  \\
\hline
\end{tabular}
\caption{Relative errors in the $L_2$-norm and in the $H^1$-norm for
  mesh sizes $h_l= 2^{-l}, l=1,...,6$,  for $m=2$ in \eqref{eps}.}
\label{testm2}
\end{table}

\begin{table}[h!] 
\center
\begin{tabular}{ l  l l l l l l  }
\hline
$l$ &  $nel$  & $nno$ &  $e_l^1$  & $q_1$ & $e_l^2$  & $q_2$ \\
\hline 
$1$ & $8$ & $9$       &  $2.35\cdot 10^{-2}$  &              & $1.20\cdot 10^{-1}$ &               \\
$2$ & $32$ & $25$      &  $5.02\cdot 10^{-3}$   &  $2.22$   & $5.18\cdot 10^{-2}$  &  $1.21$  \\
$3$ & $128$ & $81$    &  $1.24\cdot 10^{-3}$  &  $2.02$  &  $2.69\cdot 10^{-2}$  &  $9.45\cdot 10^{-1}$  \\
$4$ & $512$ & $289$   &  $2.95\cdot 10^{-4}$ &   $2.07$  & $1.06\cdot 10^{-2}$  &  $1.34$  \\
$5$ & $2048$ & $1089$ &  $7.67\cdot 10^{-5}$   & $1.94$    &  $5.40\cdot 10^{-3}$   & $9.74\cdot 10^{-1} $  \\
$6$ & $8192$ & $4225$ &  $1.94\cdot 10^{-5}$  &   $1.99$  & $2.72\cdot 10^{-3}$  &  $9.92\cdot 10^{-1}  $  \\
\hline
\end{tabular}
\caption{Relative errors in the $L_2$-norm and in the $H^1$-norm for
  mesh sizes $h_l= 2^{-l}, l=1,...,6$,  for $m=5$ in \eqref{eps}.}
\label{testm5}
\end{table}

\begin{table}[h!] 
\center
\begin{tabular}{ l  l l l l l l  }
\hline
$l$ &  $nel$  & $nno$ &  $e_l^1$  & $q_1$ & $e_l^2$  & $q_2$ \\
\hline 
$1$ & $8$ & $9$       &  $2.28\cdot 10^{-2}$  &              & $1.15\cdot 10^{-1}$ &               \\
$2$ & $32$ & $25$      &  $4.45\cdot 10^{-3}$   &  $2.36$   & $4.47\cdot 10^{-2}$  &  $1.35$  \\
$3$ & $128$ & $81$    &  $1.09\cdot 10^{-3}$  &  $2.03$  &  $2.41\cdot 10^{-2}$  &  $8.92\cdot 10^{-1}$  \\
$4$ & $512$ & $289$   &  $2.62\cdot 10^{-4}$ &   $2.05$  & $1.08\cdot 10^{-2}$  &  $1.16$  \\
$5$ & $2048$ & $1089$ &  $6.76\cdot 10^{-5}$   & $1.95$    &  $5.32\cdot 10^{-3}$   & $1.01 $  \\
$6$ & $8192$ & $4225$ &  $1.71\cdot 10^{-5}$  &   $1.98$  & $2.66\cdot 10^{-3}$  &  $9.98\cdot 10^{-1}  $  \\
\hline
\end{tabular}
\caption{Relative errors in the $L_2$-norm and in the $H^1$-norm for
  mesh sizes $h_l= 2^{-l}, l=1,...,6$, for $m=7$ in \eqref{eps}.}
\label{testm7}
\end{table}

\begin{table}[h!] 
\center
\begin{tabular}{ l  l l l l l l  }
\hline
$l$ &  $nel$  & $nno$ &  $e_l^1$  & $q_1$ & $e_l^2$  & $q_2$ \\
\hline 
$1$ & $8$ & $9$       &  $1.73\cdot 10^{-2}$  &              & $7.29\cdot 10^{-2}$ &               \\
$2$ & $32$ & $25$      &  $3.33\cdot 10^{-3}$   &  $2.38$   & $3.57\cdot 10^{-2}$  &  $1.03$  \\
$3$ & $128$ & $81$    &  $8.98\cdot 10^{-4}$  &  $1.89$  &  $2.15\cdot 10^{-2}$  &  $7.33\cdot 10^{-1}$  \\
$4$ & $512$ & $289$   &  $2.36\cdot 10^{-4}$ &   $1.93$  & $1.08\cdot 10^{-2}$  &  $9.94\cdot 10^{-1}$  \\
$5$ & $2048$ & $1089$ &  $6.09\cdot 10^{-5}$   & $1.96$    &  $5.26\cdot 10^{-3}$   & $1.04 $  \\
$6$ & $8192$ & $4225$ &  $1.55\cdot 10^{-5}$  &   $1.98$  & $2.62\cdot 10^{-3}$  &  $1.00 $  \\
\hline
\end{tabular}
\caption{Relative errors in the $L_2$-norm and in the $H^1$-norm for
  mesh sizes $h_l= 2^{-l}, l=1,...,6$,  for $m=9$ in \eqref{eps}.}
\label{testm9}
\end{table}

\begin{figure}[h!]
\begin{center}
\begin{tabular}{cc}
  {\includegraphics[scale=0.5, clip=]{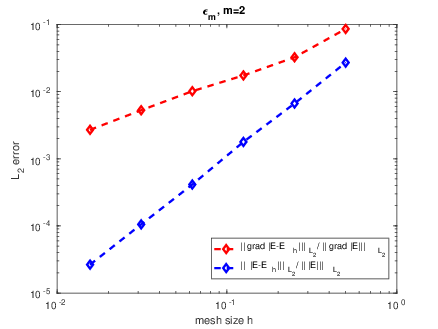}} &
  {\includegraphics[scale=0.5, clip=]{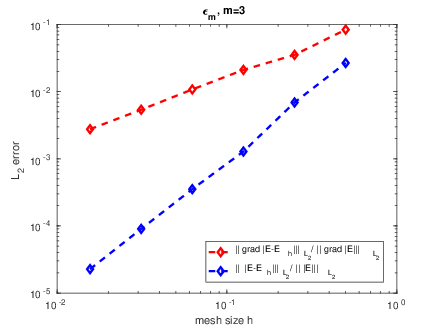}}   
\end{tabular}\
\end{center}
\caption{Relative errors for $m=2$ (left) and $m=3$ (right).}
\label{fig:F2}
\end{figure}

\begin{figure}[h!]
\begin{center}
\begin{tabular}{cc}
  {\includegraphics[scale=0.5, clip=]{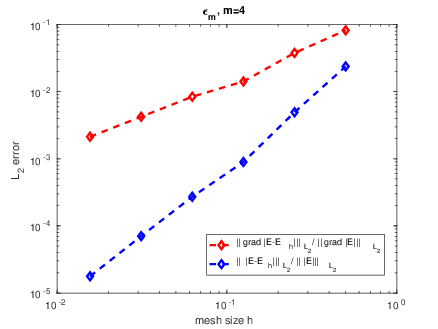}} &
  {\includegraphics[scale=0.5, clip=]{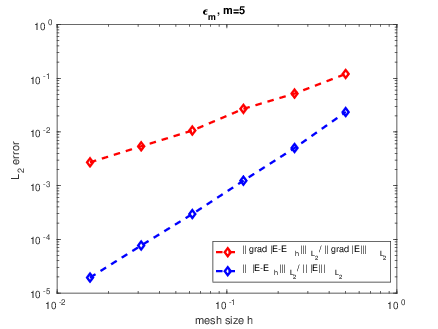}}   
\end{tabular}\
\end{center}
\caption{Relative errors for $m=4$ (left) and $m=5$ (right).}
\label{fig:F3}
\end{figure}

\begin{figure}[h!]
\begin{center}
\begin{tabular}{cc}
  {\includegraphics[scale=0.5, clip=]{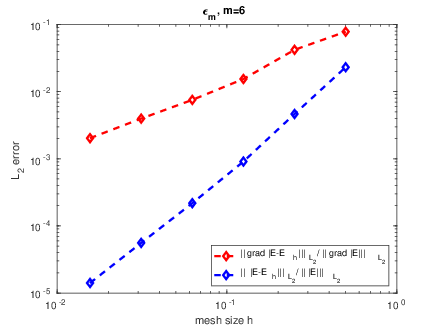}} &
  {\includegraphics[scale=0.5, clip=]{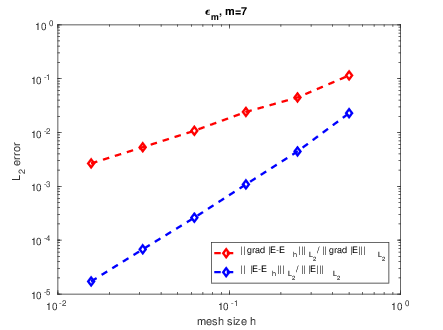}}   
\end{tabular}\
\end{center}
\caption{Relative errors for $m=6$ (left) and $m=7$ (right).}
\label{fig:F4}
\end{figure}

\begin{figure}[h!]
\begin{center}
\begin{tabular}{cc}
  {\includegraphics[scale=0.5, clip=]{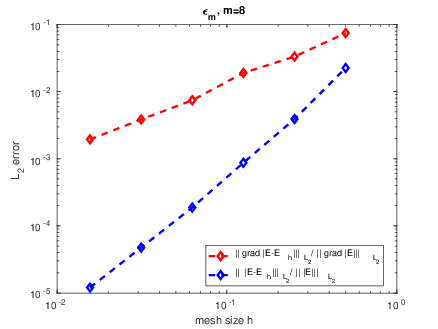}} &
  {\includegraphics[scale=0.5, clip=]{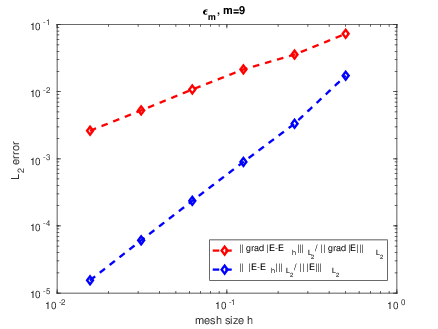}}   
\end{tabular}\
\end{center}
\caption{Relative errors for $m=8$ (left) and $m=9$ (right).}
\label{fig:F5}
\end{figure}

\begin{figure}[h!]
\begin{center}
  \begin{tabular}{cccc}
    \hline
    \multicolumn{4}{c}{ $|E_h|, m=2$}\\
  {\includegraphics[scale=0.2, clip=]{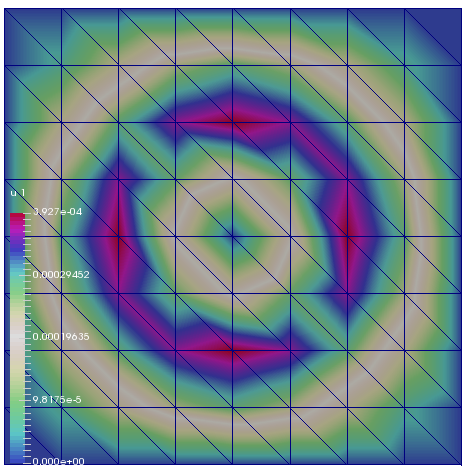}}  &
  {\includegraphics[scale=0.2, clip=]{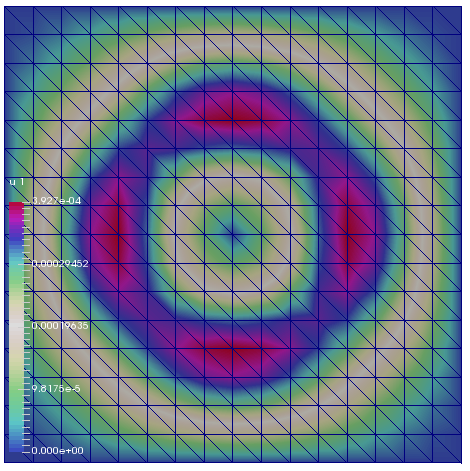}} &
  {\includegraphics[scale=0.2, clip=]{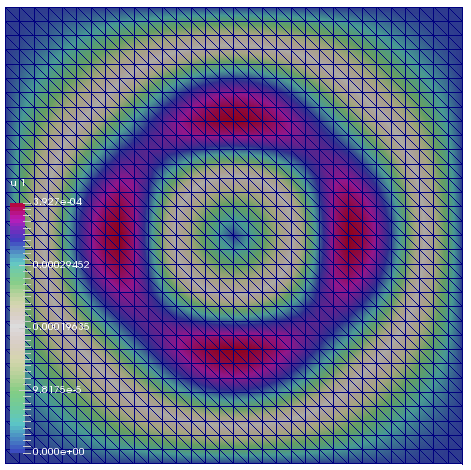}}  &
   {\includegraphics[scale=0.2, clip=]{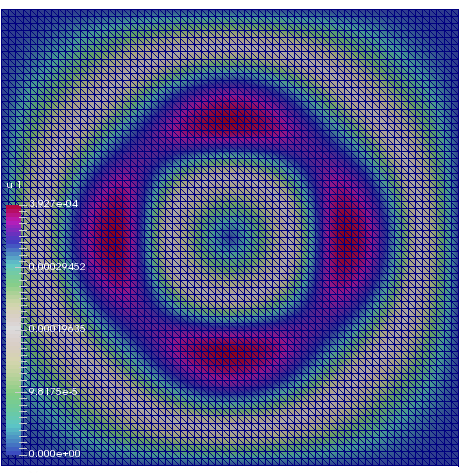}}\\
    $h=0.125$ &  $h=0.0625$ & $h=0.03125$ & $h=0.015625$\\
   \hline
     \multicolumn{4}{c}{ $| E|, m=2$}\\
     {\includegraphics[scale=0.2, clip=]{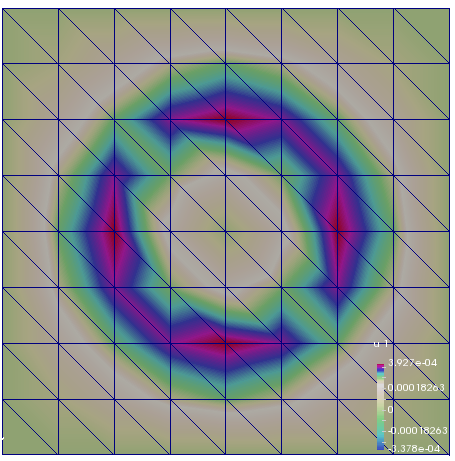}}  &
  {\includegraphics[scale=0.2, clip=]{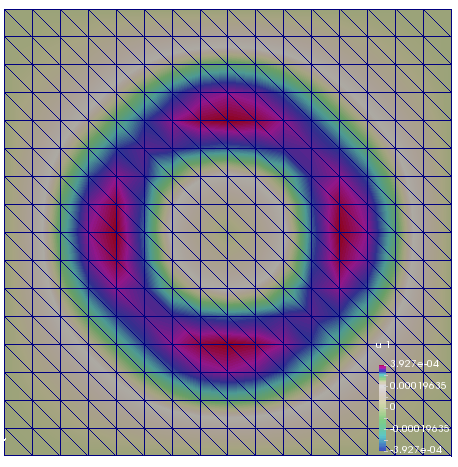}} &
  {\includegraphics[scale=0.2, clip=]{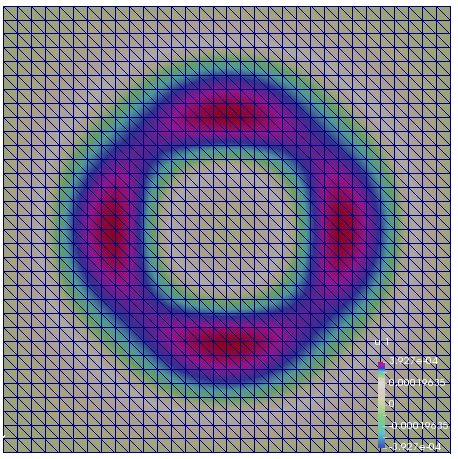}}  &
   {\includegraphics[scale=0.2, clip=]{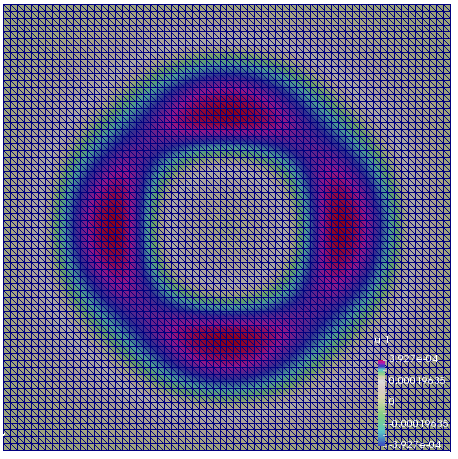}}\\
     $h=0.125$ &  $h=0.0625$ & $h=0.03125$ & $h=0.015625$\\
   \hline
      \multicolumn{4}{c}{ $|E_h|, m=5$}\\
  {\includegraphics[scale=0.2, clip=]{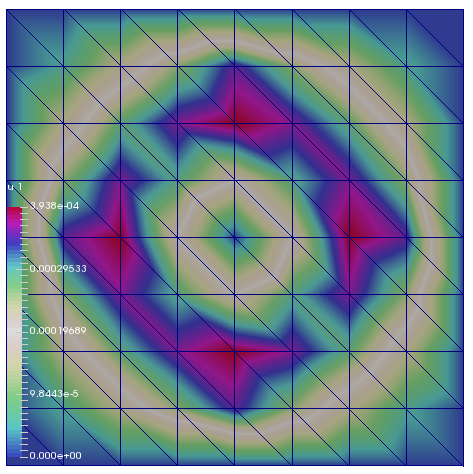}}  &
  {\includegraphics[scale=0.2, clip=]{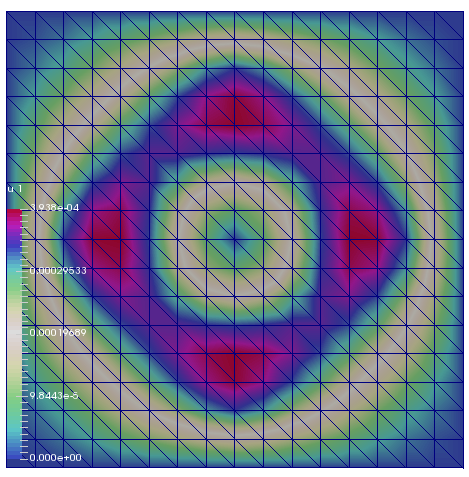}} &
  {\includegraphics[scale=0.2, clip=]{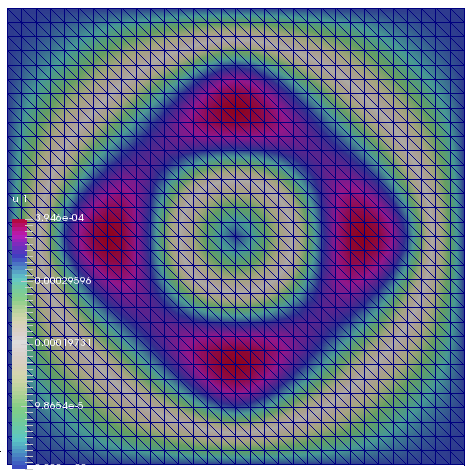}}  &
   {\includegraphics[scale=0.2, clip=]{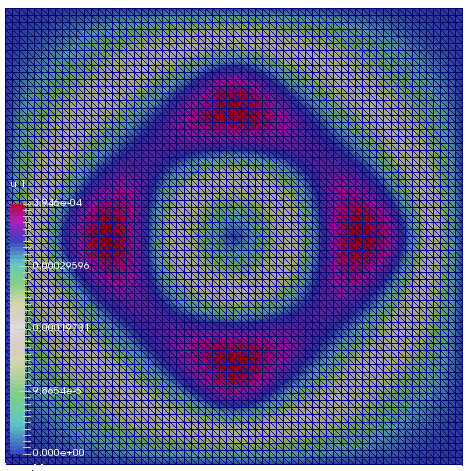}}\\
    $h=0.125$ &  $h=0.0625$ & $h=0.03125$ & $h=0.015625$\\
   \hline
     \multicolumn{4}{c}{ $|E|, m=5$}\\
     {\includegraphics[scale=0.2, clip=]{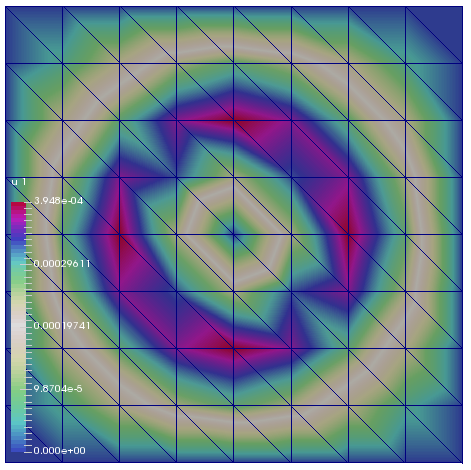}}  &
  {\includegraphics[scale=0.2, clip=]{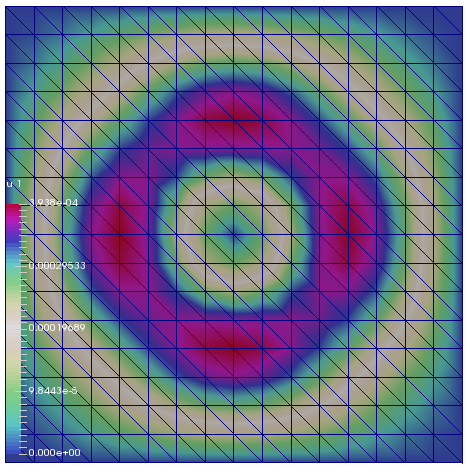}} &
  {\includegraphics[scale=0.2, clip=]{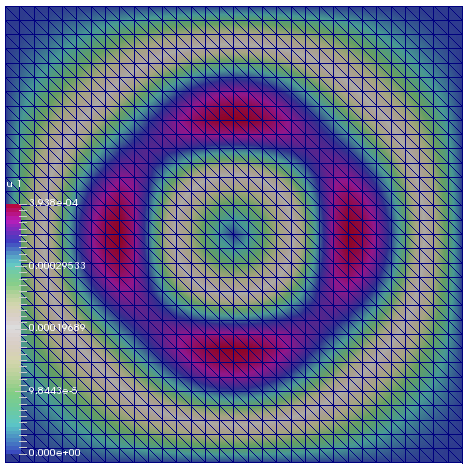}}  &
   {\includegraphics[scale=0.2, clip=]{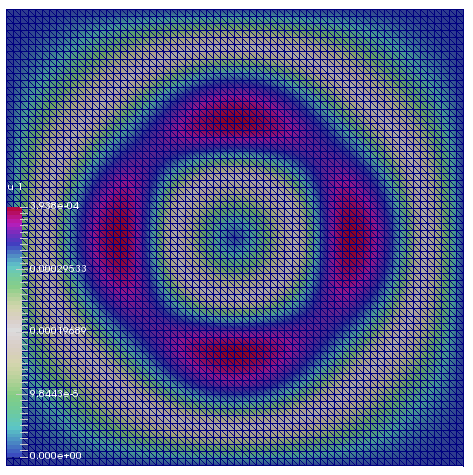}}\\
     $h=0.125$ &  $h=0.0625$ & $h=0.03125$ & $h=0.015625$\\ 
   \hline
\end{tabular}
\end{center}
\caption{
    \emph{Computed vs. exact solution for different meshes taking $m=2$  and $m=5$  in \eqref{eps}.}}
\label{fig:F6}
\end{figure}

\begin{figure}[h!]
\begin{center}
  \begin{tabular}{cccc}
    \hline
    \multicolumn{4}{c}{ $|E_h|, m=7$}\\
  {\includegraphics[scale=0.2, clip=]{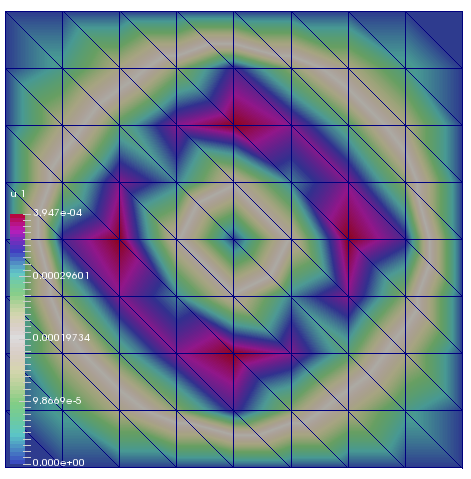}}  &
  {\includegraphics[scale=0.2, clip=]{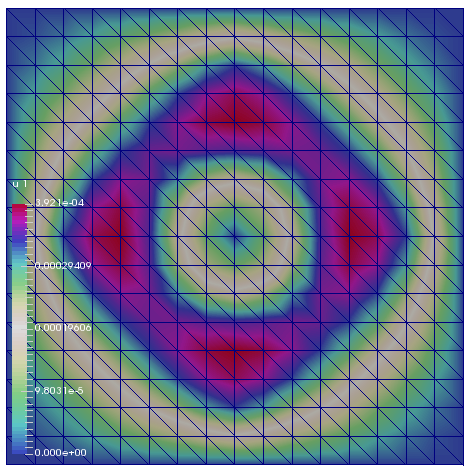}} &
  {\includegraphics[scale=0.2, clip=]{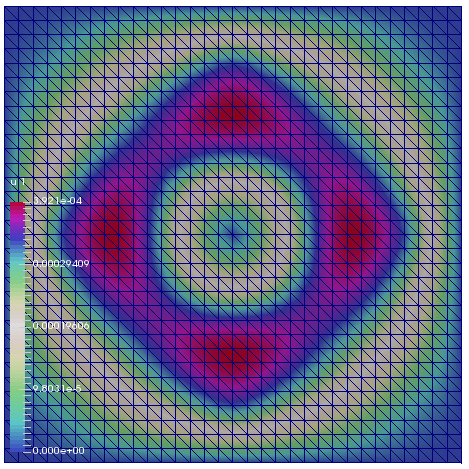}}  &
   {\includegraphics[scale=0.2, clip=]{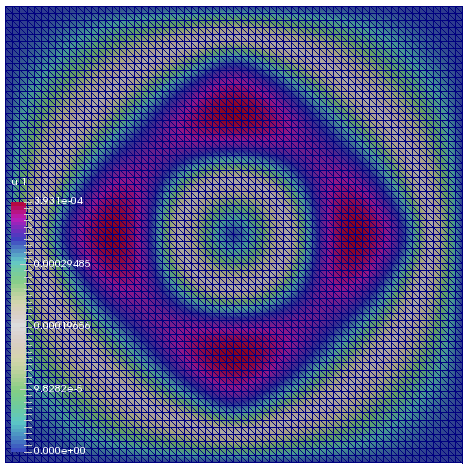}}\\
    $h=0.125$ &  $h=0.0625$ & $h=0.03125$ & $h=0.015625$\\
   \hline
     \multicolumn{4}{c}{ $|E|, m=7$}\\
      {\includegraphics[scale=0.2, clip=]{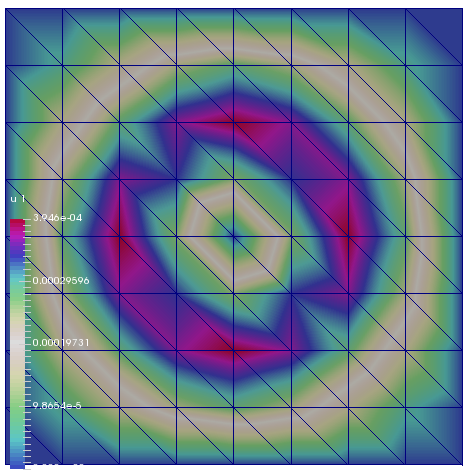}}  &
  {\includegraphics[scale=0.2, clip=]{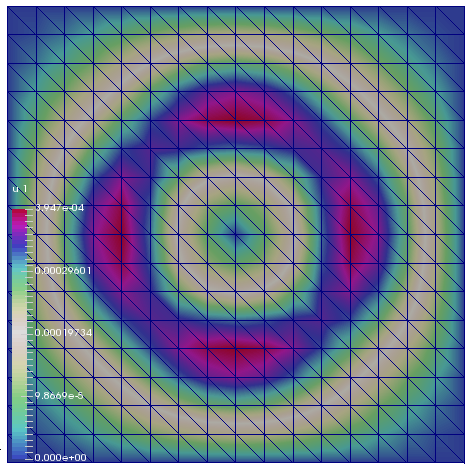}} &
  {\includegraphics[scale=0.2, clip=]{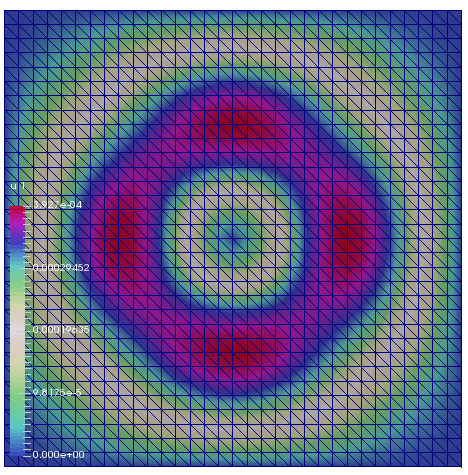}}  &
   {\includegraphics[scale=0.2, clip=]{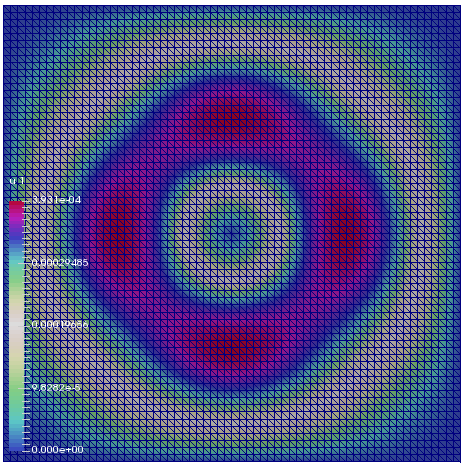}}\\
     $h=0.125$ &  $h=0.0625$ & $h=0.03125$ & $h=0.015625$\\
   \hline
      \multicolumn{4}{c}{ $|E_h|, m=9$}\\
  {\includegraphics[scale=0.2, clip=]{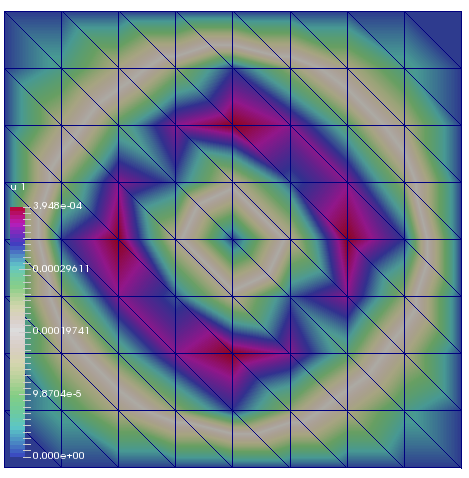}}  &
  {\includegraphics[scale=0.2, clip=]{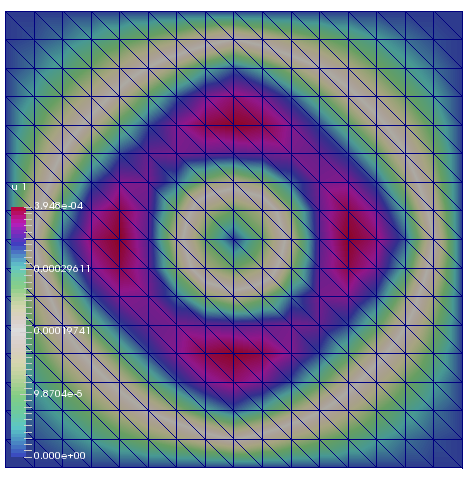}} &
  {\includegraphics[scale=0.2, clip=]{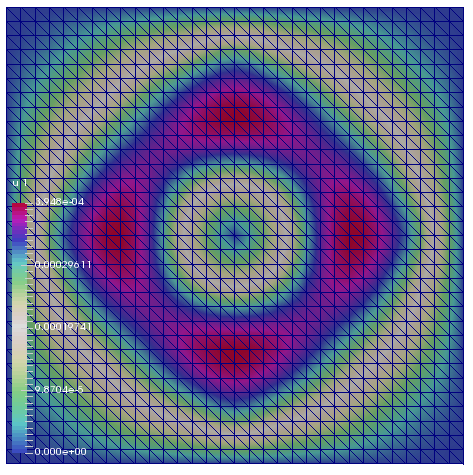}}  &
   {\includegraphics[scale=0.2, clip=]{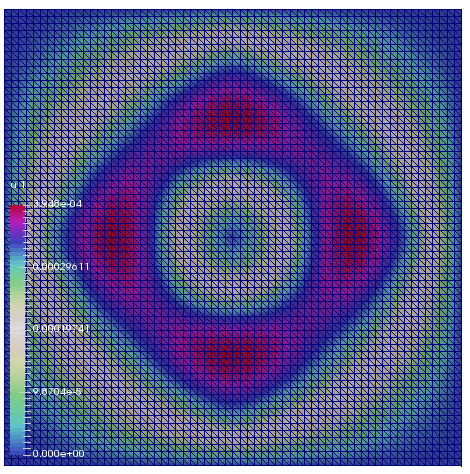}}\\
    $h=0.125$ &  $h=0.0625$ & $h=0.03125$ & $h=0.015625$\\
   \hline
     \multicolumn{4}{c}{ $|E|, m=9$}\\
     {\includegraphics[scale=0.2, clip=]{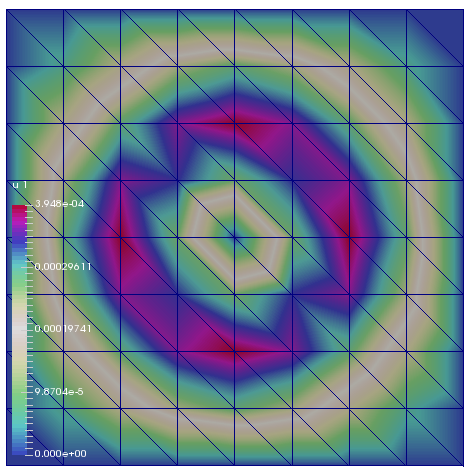}}  &
  {\includegraphics[scale=0.2, clip=]{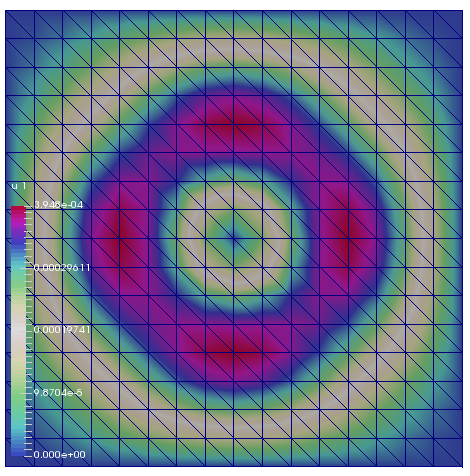}} &
  {\includegraphics[scale=0.2, clip=]{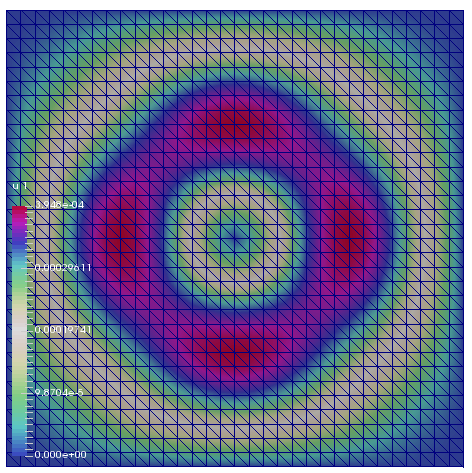}}  &
   {\includegraphics[scale=0.2, clip=]{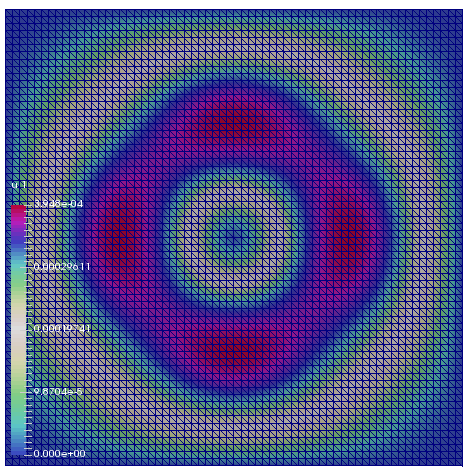}}\\
     $h=0.125$ &  $h=0.0625$ & $h=0.03125$ & $h=0.015625$\\
   \hline
\end{tabular}
\end{center}
\caption{
    \emph{Computed vs. exact solution for different meshes taking $m=2$  and $m=5$  in \eqref{eps}.}}
\label{fig:F7}
\end{figure}

\section{Conclusion} 
In this paper we consider the time harmonic 
Maxwell's equations obtained through Laplace transform applied 
to a time-dependent model problem with a certain variable, positive, dielectric 
permittivity function $\varepsilon (x)$.  Due to the varying nature of 
the dielectric 
permittivity function, we study the problem setting in a rectangular 
(cubic in $3d$) domain 
$\Omega$ split into an axi-parallel interior subdomain $\Omega_1$ 
with varying $\varepsilon(x)$ and an outer domain 
$\Omega_2:=\Omega\setminus\Omega_1$, where 
$\varepsilon\equiv 0$.  Thus, $\Omega_1$ and $\Omega_2$ 
are two disjoint open sets with a, partially, common boundary (see Fig. 1). 
In multiscale setting they may be considered as 
the domains of fine and coarse numerical resolutions. 
We construct a $P1$ stabilized finite element 
approximations for this problem and prove its consistency and well-posedness. 
As for the accuracy of the constructed numerical scheme we  
derive, optimal,  
a priori and a posteriori error bounds, 
in some, gradient dependent, weighted energy norms. 

The optimality is conformed both in forward and dual constructions 
which yield the same convergence  rates. 
The involved weight function appears in 
different forms and combines the dielectric permittivity and the 
Laplace transformation variable. We have  
implemented several numerical 
examples that validate the robustness of the theoretical studies. 

This problem is of vital importance in solving the 
{\sl Coefficient Inverse Problems} (CIPs) with several applications ranging 
from medical physics (radiation therapy) to micro turbines, computer chips, 
devices in fusion energy studies and so on. 
For the study of a corresponding time-dependent 
problem we refer to, e.g., \cite{BR}. 
In many application, to determine a reliable dielectric 
permittivity function, the
 time-dependent Maxwell's equations are required 
in the entire space $\mathbb{R}^{3}$, see, e.g.,  \cite{BK, BTKM1,
  BTKM2, BondestaB, TBKF1, TBKF2}. In present study, however,  
 it is more efficient to consider
Maxwell's equations in bounded domain, 
with constant dielectric permittivity function in
a neighborhood of the boundary of the computational domain. 
The  P1 finite element scheme of \cite{BMaxwell} is used
for solution of different CIPs to determine the dielectric
permittivity function in non-conductive media for time-dependent
Maxwell's equations using simulated and experimentally generated data,
see, e.g., 
\cite{BMedical, BeilinaHyb, BK,  BTKM1, BTKM2, BondestaB, TBKF1, TBKF2}. 


\vspace{0.5cm}

\noindent \textbf{\underline{Acknowledgment:}} The research of both authors
is supported by the Swedish Research Council grant VR 2018-03661. The first 
author acknowledges the support of the VR grant DREAM. 
\rule{2mm}{2mm}


\end{document}